\def\BBox{\rule{2mm}{3mm}}
\def\QED{\hfill$\BBox$}
\newenvironment{proof}
{\begin{rm}\par\smallskip\noindent{\bf Proof.}\quad}{\QED\end{rm}}
\newtheorem{thm}{Theorem}[section]
\newtheorem{remark}[thm]{\bfseries Remark}    %
\newtheorem{prop}[thm]{\bfseries Proposition} %
\newtheorem{conj}[thm]{\bfseries Conjecture}
\newtheorem{defn}[thm]{\bfseries Definition}
\newtheorem{exmp}[thm]{\bfseries Example}
\newtheorem{prob}[thm]{\bfseries Problem}
\begin{document}

\title{A two-dimensional topological representation theorem for matroid polytopes of rank 4}

\author{
Hiroyuki Miyata
\\
Department of Computer Science, \\
Gunma University, Japan\\
{\tt hmiyata@gunma-u.ac.jp}
}

\maketitle

\begin{abstract}
The Folkman-Lawrence topological representation theorem, which states that every (loop-free) oriented matroid of rank $r$ can be represented as a pseudosphere arrangement on the $(r-1)$-dimensional sphere $S^{r-1}$, is one of the most outstanding results in oriented matroid theory.
In this paper, we provide a lower-dimensional version of the topological representation theorem for uniform matroid polytopes of rank $4$.
We introduce $2$-weak configurations of points and pseudocircles ($2$-weak PPC configurations) on $S^2$ and prove that 
every uniform matroid polytope of rank $4$ can be represented by a $2$-weak PPC configuration. 
As an application, we provide a proof of Las Vergnas conjecture on simplicial topes for the case of uniform matroid polytopes of rank $4$.
\end{abstract}

\maketitle

\section{Introduction}
Oriented matroid theory, introduced by Bland and Las Vergnas~\cite{BL78}, and Folkman and Lawrence~\cite{FL78}, is a rich theory, which has connections with various fields such as discrete geometry, topology and algebraic geometry, and operations research (see \cite{BLSWZ99}).
One of the driving forces of the theory is the Folkman-Lawrence topological representation theorem~\cite{FL78}, which claims that there is a one-to-one correspondence
between (loop-free) oriented matroids of rank $r$ and equivalence classes of pseudosphere arrangements on the $(r-1)$-dimensional sphere $S^{r-1}$.
This theorem enables one to translate combinatorial problems into topological problems, and vice versa.

When $r=3$, the topological representations of oriented matroids given by the theorem can be transformed into {\it pseudoline arrangements} in the projective plane $\mathbb{P}^2$.
Pseudoline arrangements are a topological generalization of line arrangements in $\mathbb{P}^2$ and are a classical subject of discrete geometry  (see \cite{G72}).
One of the famous results on pseudoline arrangements is Ringel's homotopy theorem~\cite{R56}, which states that every pair of simple arrangements of $n$ pseudolines can be transformed into each other by a finite sequence of triangle flips. 
This theorem immediately leads to the fact that every pair of uniform oriented matroids of rank $3$ on the same ground set can be transformed into each other by a finite sequence of mutations.
Also, it is well-known that any pseudoline arrangement contains a triangle region (see \cite{G72}), which leads to that any simple oriented matroid of rank $3$ has an acyclic reorientation with exactly three extreme points. 

When $r \geq 4$, the topological representations of oriented matroids become more complicated. Pseudosphere arrangements on $S^{r-1}$ ($r \geq 4$) are usually very difficult to visualize, and there remain many open problems on rank $r (\geq 4)$ oriented matroids.
One of the outstanding problems is Cordovil-Las Vergnas conjecture (mentioned in \cite{RS88}), which claims that every pair of uniform oriented matroids on the same ground set can be transformed into each other by a finite sequence of mutations.
In terms of pseudosphere arrangements, this conjecture claims the existence of a higher-dimensional analogue of Ringel's homotopy theorem.
The conjecture is known to be true for realizable oriented matroids~\cite{RS88}, but in general case, the conjecture is wide open.
To be able to apply mutation to an oriented matroid,  there must exist a simplicial tope  (in terms of pseudosphere arrangements, a simplicial cell) in it.
The existence of a simplicial tope in every simple oriented matroid was conjectured by Las Vergnas~\cite{LV80}, but the answer is still unknown except for oriented matroids of rank 3~\cite{G72},  realizable oriented matroids~\cite{S79}, and oriented matroids admitting extensions in general position that are Euclidean~\cite{EM82}.

To obtain a better understanding of the case $r \geq 4$, one of the natural approaches might be to represent oriented matroids as lower-dimensional geometric objects.
Such an approach is quite common in discrete geometry.
To list a few examples in polytope theory, 3-polytopes are often represented as 3-connected plane graphs (Steinitz' theorem), and 4-polytopes are often visualized as Schlegel diagrams in $\mathbb{R}^3$.
Gale diagrams are also a common tool to visualize high-dimensional polytopes with few vertices in low dimensional spaces (see \cite{Z95}).
It is also common to study high-dimensional geometric objects through suitably chosen projections.
However, there is not so much work on oriented matroids in this direction. 
There are remarkable results that improve the topological representations of oriented matroids into the topological representations by PL-sphere arrangements~\cite{EM82} and that
propose another type of the topological representation theorem by pseudoconfigurations of points for rank $3$ oriented matroids~\cite{GP84},
but the dimensions of the topological representations are the same as the original.
If we restrict ourselves to some special subclasses of oriented matroids, we can see that degree-$k$ oriented matroids, a special class of oriented matroids of rank $k+2$, can be represented by certain geometric configurations in $\mathbb{R}^2$~\cite{M17}. 
However,  as far as the author knows, even such a result is not known for other classes of oriented matroids.
\\
\\
\textbf{Our contribution.}
In this paper, we focus on the class of {\it matroid polytopes}.
Matroid polytopes are a fundamental class of oriented matroids, which provide a precise combinatorial setting for studying polytopes and have played significant roles in polytope theory (see \cite[Chapter 9]{BLSWZ99}).
The class of matroid polytopes lies between polytopes and spheres, but the gaps from those objects are still not well-understood.
To list a few example, shellability of face lattices of matroid polytopes is a famous open problem~\cite[Problem 4.18]{BLSWZ99}.
Existence of a gap between the $f$-vectors of polytopes and those of matroid polytopes is also a well-known open problem~\cite[Problem 4.40]{BLSWZ99}, whereas the corresponding problem for matroid polytopes and spheres
was recently resolved~\cite{B16,BZ18}.

To deepen the understanding of the class of matroid polytopes, it would be useful to have a new type of topological representation theorem for matroid polytopes.
As a first step,  in this paper, we provide a two-dimensional version of the topological representation theorem for uniform matroid polytopes of rank $4$, an abstraction of simplicial $3$-polytopes.
Motivated by the notion of pseudoconfigurations of points~\cite{GP84}, which can represent any rank $3$ oriented matroids, we introduce  {\it $2$-weak configurations of points and pseudocircles ($2$-weak PPC configurations)}, which generalize configurations of points and circles on $S^2$ (Section 3).
It can easily be seen that $2$-weak PPC configurations also determine oriented matroids.
We prove that every uniform matroid polytopes of rank $4$ admits a two-dimensional topological representation as a $2$-weak PPC configuration (Section 4).
Although it is not clear whether this result generalizes into the general-rank case, there is some evidence that a similar result might hold in the general-rank case (see Problem~\ref{prob:general} in Section 6). 

With the new topological representation theorem, we can give a two-dimensional geometric formulation to Las Vergnas simplicial tope conjecture in the case of uniform matroid polytopes of rank $4$.
Based on this formulation, we provide a proof of the conjecture for this class of oriented matroids (Section 5).
\\
\\
\textbf{Notation.}
We will use the following notations in this paper, where $E$ is a set and $(F,<)$ is a totally ordered set.
\begin{itemize}[itemsep=0mm,leftmargin=*]
\item $[a]_F := \{ f \in F \mid f \leq a\}$.
\item $\Lambda^* (F,r) := \{ (i_1,\dots, i_r) \in F^r \mid i_1 < i_2 < \cdots < i_r \}$.
\item $\Lambda (E,r) := \{ (i_1,\dots, i_r) \in E^r \mid i_k \neq i_l \text{ for all } k,l \in [r] \ (k \neq l) \}$.
\item $( i_1,\dots, i_r )^* := (i_{\pi (1)},\dots,i_{\pi (r)})$ for $(i_1,\dots,i_r) \in \Lambda (F,r)$, where $\pi$ is the permutation on $[r]$ such that  $i_{\pi (1)} < \cdots < i_{\pi (r)}$.
\item $X^{\sigma} := \{ e \in E \mid X_e =  \sigma \}$ for $X \in \{ +1,-1,0\}^E$ and $\sigma \in \{ +1, -1, 0\}$.
\item $S(X,Y) := \{ e \in E \mid X_e = - Y_e (\neq 0) \}$ for $X, Y \in \{ +1,-1,0\}^E$.
\end{itemize}

\section{Preliminaries}
We assume that the reader is familiar with oriented matroids.
Here, we only collect basic definitions and facts on oriented matroids.
For details, see \cite{BLSWZ99}.

\subsection{Basic definitions}
There are several equivalent axiom systems for oriented matroids.
In this paper,  we adopt a definition based on the cocircuit axioms.
\begin{defn}(Cocircuit axioms for oriented matroids)\\
Let $E$ be a finite set and ${\cal C}^* \subset \{ +1,-1,0\}^E$ a set of sign vectors.
We call the pair $(E,{\cal C}^*)$  an {\it oriented matroid} if it satisfies the following axioms.
An element of the set ${\cal C}^*$ is called a {\it cocircuit}.
\begin{itemize}
\item[(C0)] $0 \notin {\cal C}^*$.
\item[(C1)] ${\cal C}^* = -{\cal C}^*$.
\item[(C2)] For all $X,Y \in {\cal C}^*$, if $X^0 \subset Y^0$, then $X = Y$ or $X=-Y$.
\item[(C3)] For any $X,Y \in {\cal C}^*$ and any $e \in S(X,Y)$, there exists $Z \in {\cal C}^*$ with $Z^0 = (X^0 \cap Y^0) \cup \{ e \}$, $Z^+ \supset X^+ \cap Y^+$  and  $Z^- \supset X^- \cap Y^-$.
\end{itemize}
\end{defn}
If there exists $k \in \mathbb{N}$ such that $|X^0| = k$ for all $X \in {\cal C}^*$, the oriented matroid $(E,{\cal C}^*)$ is said to be {\it uniform}.
For a set  ${\cal C}^* \subset \{ +1,-1,0\}^E$ satisfying this property, Axiom (C3) can be replaced by the following axiom:
\begin{itemize}
\item[(C3')] For any $X,Y \in {\cal C}^*$ with $|X^0 \setminus Y^0| = 1$ and any $e \in S(X,Y)$, there exists $Z \in {\cal C}^*$ with $Z^0 = (X^0 \cap Y^0) \cup \{ e \}$, $Z^+ \supset X^+ \cap Y^+$  and  $Z^- \supset X^- \cap Y^-$.
\end{itemize}
For a uniform oriented matroid ${\cal M} = (E,{\cal C}^*)$ and $A \subset E$,  we let ${\cal C}|_A^* := \{ X|_A \mid X \in {\cal C}^*, X^0 \subset A \}$ and call the oriented matroid
${\cal M} [A]=(A, {\cal C}|_A^*)$ the {\it restriction} of ${\cal M}$ to $A$.
We let ${\cal C}^* / A := \{ X|_{E \setminus A} \mid X \in {\cal C}^*, X^0 \supset A \}$. Then ${\cal M} / A=(E \setminus A, {\cal C}^* / A)$ is also an oriented matroid, called the {\it contraction} of ${\cal M}$ by $A$.
For $X, Y \in \{ +1,-1,0 \}^E$, we define $X \circ Y \in \{ +1,-1,0\}^E$ by
\begin{align*}
(X \circ Y)_e = 
\begin{cases}
X_e & \text{if $X_e \neq 0$,} \\
Y_e & \text{otherwise}
\end{cases} 
\end{align*}
and let ${\cal V}^*_{\cal M} := \{ X_1 \circ \cdots \circ X_k \mid k \in \mathbb{N}, X_1,\dots,X_k \in {\cal C}^* \} \cup \{ 0 \}$.
An element of ${\cal V}^*_{\cal M}$ is called a {\it covector} of ${\cal M}$.
For $X, Y \in {\cal V}^*_{\cal M}$, we define
\begin{align*}
X \geq Y \Leftrightarrow X_e = Y_e \text{ or } Y_e = 0, \text{ for all } e \in E.
\end{align*}
The rank of ${\cal M} $ is the length $r$ of a maximal chain $0 < X_1 < \cdots < X_r$ of covectors in ${\cal V}^*_{\cal M}$.
If the all-positive vector is contained in ${\cal V}^*_{\cal M}$, the oriented matroid ${\cal M}$ is said to be {\it acyclic}.
The set $F \subset E$ is called a {\it face} of ${\cal M}$ if there is the covector $X_F \in {\cal V}^*_{\cal M}$
with $X^0_F = F$ and $X^+_F = E \setminus F$.
A face $F$ is called an {\it extreme point} (resp. a {\it facet}) if the rank of ${\cal M}[F]$ is $1$ (resp. $r-1$).
If $\{ e \}$ is an extreme point of ${\cal M}$ for every $e \in E$, ${\cal M}$ is said to be a {\it matroid polytope}.
For a matroid polytope ${\cal M}$, a set $F \subset E$ is a face of ${\cal M}$ if and only if ${\cal M}/F$ is acyclic.

Suppose that ${\cal M}$ is a uniform oriented matroid of rank $r$.
For each $\lambda \in \Lambda (E, r-1)$, there are exactly two cocircuits $\pm X_{\lambda} \in {\cal C}^*$ with $X_{\lambda}^0 \supset \lambda$.
We define a map $\chi_{\cal M} : E^r \rightarrow \{ +1, -1, 0\}$ that satisfies the following two conditions.
\begin{itemize}
\item[(B1)] $\chi_{\cal M} (e_{\sigma (1)},\dots,e_{\sigma (r)}) = {\rm sgn}(\sigma )\chi_{\cal M} (e_1,\dots,e_r)$ for all $e_1,\dots,e_r \in E$ and all permutations $\sigma$ on $[r]$.
\item[(B2)] $\chi_{\cal M} (\lambda, e) = X_{\lambda}(e)X_{\lambda}(f)\chi_{\cal M} (\lambda, f)$ for all $\Lambda (E, r-1)$ and all $e,f \in E$.
\end{itemize}
This map is well-defined and is uniquely determined from ${\cal C}^*$ (up to taking the negative).
The map $\chi_{\cal M}$ is called a {\it chirotope} of ${\cal M}$.
It is also possible to specify an oriented matroid by a chirotope; see \cite{BLSWZ99}.

\subsection{Topological representation theorem}
One of the outstanding facts in oriented matroid theory is the Folkman-Lawrence topological representation theorem~\cite{FL78}, which states 
that every oriented matroid rank $d+1$ can be represented by a {\it pseudosphere arrangement} in $S^{d}$.
Here, we briefly review another type of the topological representation theorem established by Goodman and Pollack~\cite{GP84}.
(Our treatment is slightly different from the original in the sense that we consider configurations in the sphere $S^2$ instead of the projective plane $\mathbb{P}^2$.)

\begin{defn}(Pseudoconfigurations of points) \\
Let $E$ be a finite set.
A pair $PP=(P,L)$ of a point set $P:=(p_e)_{e \in E}$ in $S^2$ and a collection $L$ of simple closed curves
is called a {\it pseudoconfiguration of points} (or a {\it generalized configuration of points}) if the following three conditions hold:
\begin{itemize}
\item For any $l \in L$, there exist at least two points of $P$ lying on $l$.
\item For any two points of $P$, there exists a unique curve in $L$ that contains both points.
\item Any pair of distinct curves $l_1,l_2 \in L$ intersects (transversally) exactly twice. 
\end{itemize}
\end{defn}
For each $l \in L$, we label the two connected components of $S^2 \setminus l$  arbitrarily as $l^+$ and $l^-$.
Then, we assign the sign vector $X_{l} \in \{ +1,-1,0\}^E$ 
such that $X_{l}^0 =  \{ e \in E \mid  p_e \in l\}$, $X_{l}^+ = \{ e \in E \mid  p_e \in l^+\}$ and  
$X_{l}^- =\{ e \in E \mid  p_e \in l^-\}$,
and we let ${\cal C}^*_{PP} := \{ \pm X_{l} \mid l \in L\}$.
Then, ${\cal M}_{PP}=(E, {\cal C}^*_{PP})$ is an acyclic oriented matroid of rank $3$.
Goodman and Pollack~\cite{GP84} proved that in fact the converse also holds.
\begin{thm}{\rm (\cite{GP84})}
For any acyclic oriented matroid ${\cal M}$ of rank $3$, there exists a pseudoconfiguration of points $PP$ such that ${\cal M} = {\cal M}_{PP}$.
\end{thm}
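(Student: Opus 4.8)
\medskip\noindent\textbf{Proof sketch.}\quad The plan is to pass from the hyperplane description of $\mathcal M$ provided by the Folkman--Lawrence theorem to the ``dual'' point description, by means of a sweep. Since $\mathcal M$ has rank $3$, the Folkman--Lawrence theorem~\cite{FL78} represents it as an arrangement $\mathcal A = (S_e)_{e\in E}$ of pseudocircles on $S^2$, where the sides $S_e^+, S_e^-$ of each $S_e$ are labelled so that the cocircuit $X_{S_e}$ lies in $\mathcal C^*$ and any two distinct members meet transversally in exactly two points. Because $\mathcal M$ is acyclic, the all-positive sign vector is a covector, i.e.\ some $2$-cell $T^+$ of $\mathcal A$ carries the all-positive label; fix a point $q$ in its interior. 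By the standard passage between linear and affine oriented matroids --- delete from $\mathcal A$ a pseudocircle in general position placed so that $q$ lies on its positive side --- we may assume we are working with an affine pseudoline arrangement $\mathcal A'$ in $\mathbb R^2$ whose oriented matroid, with the point at infinity in the positive tope, is $\mathcal M$; the cocircuits of $\mathcal M$ are then read off from the vertices of $\mathcal A'$.

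Now sweep $\mathcal A'$: let a directed pseudoline $\ell_\theta$ rotate, its direction ranging over $[0,\pi)$, and record at each $\theta$ the order $\pi_\theta$ in which $\ell_\theta$ crosses the members of $\mathcal A'$. As $\theta$ increases, $\pi_\theta$ changes by reversing one or more pairwise-disjoint blocks, each reversal occurring exactly when $\ell_\theta$ sweeps through a vertex of $\mathcal A'$, hence recording a cocircuit of $\mathcal M$ together with its sign pattern relative to $q$. This yields an allowable sequence $\Sigma$ of permutations of $E$ whose reversal data is precisely the family of cocircuits of $\mathcal M$. (If $\mathcal M$ is not uniform, some blocks have length $\ge 3$, corresponding to concurrent pseudolines; nothing below changes.)

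The crux is to realize $\Sigma$ as a pseudoconfiguration of points. One builds, incrementally over the finitely many reversal steps of a half-period of $\Sigma$, a point set $P = (p_e)_{e\in E}$ in a disc together with, for each pair $\{e,f\}$, an arc $\ell_{ef}$ through $p_e$ and $p_f$, in the manner of a wiring diagram: the sweep parameter $\theta$ now plays the role of a sweeping direction for $P$, the reversal of a block $B$ at step $\theta_0$ forces the points $(p_b)_{b\in B}$ onto a common arc at ``time'' $\theta_0$, and the monotonicity of the allowable moves lets one draw the arcs so that any two of them, $\ell_{ef}$ and $\ell_{gh}$, cross exactly once and transversally. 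Closing the disc up to $S^2$ extends each $\ell_{ef}$ to a simple closed curve, giving $L$. One then checks the three axioms of a pseudoconfiguration of points: each $\ell_{ef}$ contains $p_e$ and $p_f$; the curve through two prescribed points is unique, since distinct pairs produce arcs meeting in at most one point; and any two curves meet transversally in exactly two points of $S^2$. Showing that this construction always goes through --- that an allowable sequence can be ``stretched'' to a generalized configuration --- is the main obstacle and is the heart of the Goodman--Pollack argument~\cite{GP84}; it is precisely here that the topology (wiring diagrams, sweeps) must be done concretely rather than formally.

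It remains to see that $\mathcal M_{PP} = \mathcal M$. For each pair $\{e,f\}$, the sign vector $X_{\ell_{ef}}$ records, for every $g \in E$, on which side of $\ell_{ef}$ the point $p_g$ lies, and by construction this side is governed by the block structure of $\Sigma$, hence coincides with the value $X(g)$ of the cocircuit $X$ of $\mathcal M$ having $X^0 \supseteq \{e,f\}$. Therefore $\mathcal C^*_{PP} = \{\pm X_\ell : \ell \in L\} = \mathcal C^*$, and since an oriented matroid is determined by its set of cocircuits, $\mathcal M_{PP} = \mathcal M$. Alternatively, one can bypass the sweep and argue by induction on $|E|$, inserting at each step one point $p_{e_0}$ together with the $|E| - 1$ new curves joining it to the previous points, as dictated by the single-element extension $\mathcal M' \leftrightarrow \mathcal M$ (single-element extensions of rank-$3$ oriented matroids carry no topological obstruction); keeping the pseudoconfiguration axioms intact across each insertion is of comparable difficulty to the realization step above.
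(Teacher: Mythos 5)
The paper does not prove this theorem: it is stated with the citation \cite{GP84} and used as a black box, so there is no in-paper argument against which your sketch can be compared. What you have written is an outline of the Goodman--Pollack route itself (Folkman--Lawrence representation $\to$ affine reduction via acyclicity $\to$ sweep $\to$ allowable sequence $\to$ wiring-diagram realization as a generalized configuration $\to$ duality check), and at a schematic level that outline is the right one.

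The substantive gap is that at the crucial step you write that ``showing that this construction always goes through \ldots\ is the heart of the Goodman--Pollack argument~\cite{GP84}''\ --- that is, you invoke the very theorem you are purporting to prove. As a self-contained proof this is circular; as a summary of \cite{GP84} it is fair, but then the phrase ``proof sketch'' overstates it. Two further points deserve more than an assertion if you intend to flesh this out. First, the claim that a generalized configuration realizing the swept allowable sequence recovers the \emph{same} oriented matroid (rather than a dual or a reorientation) rests on a genuine duality between the circular sequence of a point configuration and the local sequences of a pseudoline arrangement; this needs to be stated and proved, not folded into ``by construction this side is governed by the block structure of $\Sigma$.'' Second, in the non-uniform case the remark ``nothing below changes'' is too quick: a block reversal of length $k\ge 3$ must produce a \emph{single} curve through all $k$ points (otherwise axiom (P2), uniqueness of the curve through two given points, fails), and the incremental wiring-diagram construction has to be set up so that this happens and so that the resulting longer curve still meets every other curve exactly twice after closing up to $S^2$. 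Your alternative inductive route via single-element extensions has the same status: the assertion that rank-$3$ single-element extensions ``carry no topological obstruction'' is precisely the lemma one would have to prove.
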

Here, the acyclicity condition is not important because it can be removed by considering configurations in $\mathbb{P}^2$ (the original formulation in \cite{GP84}) or signed configurations in $S^2$.
The notion of pseudoconfigurations of points in the $S^d$ can be introduced analogously, but not every acyclic oriented matroid of rank $d+1$
can be represented as a pseudoconfiguration of points in $S^d$. For further details, see \cite[Section 5.3]{BLSWZ99}.

\section{Configurations of points and pseudocircles}
Pseudoconfigurations of points can be viewed as a topological generalization of configurations of points and great circles on the sphere $S^2$.
Here, we introduce a topological generalization of configurations of points and circles (not necessarily great circles) on the sphere $S^2$.
\begin{defn}
Let $E$ be a finite set.
If a point set $P =  (p_e)_{e \in E}$ in $S^2$ and a set $L$ of simple closed curves in $S^2$ satisfy the following conditions, the pair ${\cal P} = (P,L)$ is called a {\it strong configuration of points and pseudocircles} ({\it strong PPC configuration}).
\begin{itemize}
\item[(P1)] For each $l \in L$, there exist at least three points in $P$ lying on $l$.
\item[(P2)] For any three points $p_{e_1}, p_{e_2}, p_{e_3}$ in $P$, there exists a unique curve in $l_{e_1,e_2,e_3}$ that contains all of them.
\item[(P3)] Each pair of distinct curves in $L$ intersects at most twice, where a non-transversal intersection is counted as twice.
\end{itemize}
Let $m \in \{ 1, 2 \}$.
If ${\cal P} = (P,L)$ satisfies (P1), (P2) and the weaker version of (P3) that follows, it is called an {\it $m$-weak configuration of points and pseudocircles} ({\it $m$-weak PPC configuration}).
\begin{itemize}
\item[(P3$^m$)] Each pair of distinct curves in $L$ that shares at least $m$ points in $P$ intersects at most twice, where a non-transversal intersection is counted as twice.
\end{itemize}
\end{defn}
If every curve in $L$ passes through exactly three points in $P$, the configuration ${\cal P}=(P,L)$ is said to be {\it in general position}.
Take $q \in S^2 \setminus \bigcup_{l_{\lambda} \in L}{l_{\lambda}}$ arbitrarily and define,  for each $l_{\lambda} \in L$, $I(l_{\lambda})$ (resp. $O(l_{\lambda})$) to be the connected component of $S^2 \setminus l_{\lambda}$ that contains (resp. does not contain) $q$.
For  each $l_{\lambda} \in L$, we consider a sign vector $X_{\lambda} \in \{ +1,-1,0\}^E$ defined by
\begin{align*}
X_{\lambda}(e) = 
\begin{cases}
+1 & \text{ if $p_e \in O(l_{\lambda}),$} \\
-1 & \text{ if $p_e \in I(l_{\lambda}),$} \\
0 & \text{ if $p_e \in l_{\lambda}$} 
\end{cases}
\end{align*}
and let ${\cal C}^*({\cal P}) := \{ \pm X_{\lambda} \mid \lambda \in \Lambda (E,3) \}$.
Note that ${\cal C}^*({\cal P})$ does not depend on the choice of $q$.
When it is clear from the context what a PPC configuration is considered, we simply write $I_{\lambda}$ and $O_{\lambda}$ to refer to $I(l_{\lambda})$ and $O(l_{\lambda})$.

\begin{defn} 
Let $l_1$ and $l_2$ be simple closed curves in $S^2$.
The closure of each connected component of $S^2 \setminus l_1 \cup l_2$ is called a {\it lens} formed by $l_1$ and $l_2$.
Given a point set $P$, if a lens contains no point of $P$, then the lens is said to be {\it empty} with respect to $P$.
\end{defn}

\begin{prop}
Let $E$ be a finite set with $|E| \geq 4$ and
${\cal P} = ((p_e)_{e \in E}, L)$ a $2$-weak PPC configuration in general position. Then $(E, {\cal C}^*({\cal P}))$ is a uniform matroid polytope of rank $4$.
\label{prop:OM}
\end{prop}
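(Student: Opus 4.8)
The plan is to verify directly that $\mathcal{M} := (E,\mathcal{C}^*(\mathcal{P}))$ satisfies the cocircuit axioms, is uniform of rank $4$, and is a matroid polytope. The routine items follow from general position and (P1)--(P2): for $\lambda = \{e_1,e_2,e_3\}\in\Lambda^*(E,3)$ the curve $l_\lambda$ meets $P$ in exactly the three points $p_{e_1},p_{e_2},p_{e_3}$, so $X_\lambda^0 = \lambda$, $|X_\lambda^0| = 3$, and $X_\lambda\neq 0$ since $|E|\geq 4$; this yields (C0), (C1), and uniformity, and (C2) follows because $X^0\subseteq Y^0$ then forces $X^0 = Y^0$, after which the uniqueness clause of (P2) gives $X = \pm Y$.

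The substance is (C3), which by uniformity I would replace by (C3'). Given $X = \pm X_\lambda$ and $Y = \pm X_\mu$ with $|\lambda\setminus\mu| = 1$ --- say $\lambda = \{a,b,c\}$, $\mu = \{a,b,d\}$, $c\neq d$ --- and $e\in S(X,Y)$ (so $e\notin\lambda\cup\mu$), the eliminated cocircuit should be $Z\in\{X_\nu,-X_\nu\}$ with $\nu := \{a,b,e\}$. The key point is that $l_\lambda,l_\mu,l_\nu$ are three distinct pseudocircles each containing both $p_a$ and $p_b$, so (P3$^2$) applies to every pair among them: each pair meets at most twice, hence exactly at $\{p_a,p_b\}$ and transversally there (a non-transversal meeting at $p_a$ would already count twice and, with the forced meeting at $p_b$, exceed the bound). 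Examining a small circle around $p_a$, the six arcs cut out there by $l_\lambda,l_\mu,l_\nu$ pairwise interleave, which forces their cyclic order to be $l_\lambda,l_\mu,l_\nu,l_\lambda,l_\mu,l_\nu$ up to relabeling; so the three pseudocircles subdivide $S^2$ exactly as three great circles through two antipodal points. From this I would deduce: $S^2\setminus(l_\lambda\cup l_\mu)$ has four cells, one per pair of sides; the two arcs of $l_\nu\setminus\{p_a,p_b\}$ lie in two cells that differ in both sides; and since $e\in S(X,Y)$, the point $p_e\in l_\nu$ lies in one of the two cells on which $X$ and $Y$ disagree, so those are exactly the cells $l_\nu$ enters. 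Hence $l_\nu$ avoids the interiors of the two cells on which $X = Y$ --- one of which contains every $p_f$ with $f\in X^+\cap Y^+$, the other every $p_f$ with $f\in X^-\cap Y^-$ --- and, again from the local picture at $p_a$, these two cells lie on opposite sides of $l_\nu$. Orienting $Z$ so that the first of them is on the positive side of $l_\nu$ then gives $Z^0 = \nu = (X^0\cap Y^0)\cup\{e\}$, $Z^+\supseteq X^+\cap Y^+$, and $Z^-\supseteq X^-\cap Y^-$ (the relevant points lie off $l_\nu$ since $e$ belongs to neither $X^+\cap Y^+$ nor $X^-\cap Y^-$). This establishes (C3'), so $\mathcal{M}$ is a uniform oriented matroid, of rank $4$ because its cocircuits have $3$-element zero sets.

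For the matroid-polytope property I would use a reduction to small restrictions. Since $\{e\}$ is a face of $\mathcal{M}$ precisely when the contraction $\mathcal{M}/e$ is acyclic, $\mathcal{M}$ is a matroid polytope if and only if $\mathcal{M}/e$ is acyclic for every $e$; as $\mathcal{M}$ is uniform of rank $4$, this is equivalent to $\mathcal{M}$ having no circuit of type $(5,0)$ or $(4,1)$, i.e.\ every circuit being of type $(3,2)$ (see \cite[Chapter~9]{BLSWZ99}). Circuits have $5$ elements, and $\mathcal{M}$ restricted to any $5$-element subset again arises from a $2$-weak PPC configuration in general position (the sub-configuration on those points), so it suffices to take $|E| = 5$, where the circuit $C$ is unique up to sign. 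Orthogonality of $C$ with the cocircuits (whose supports here have two elements) gives, for any $i,j$, that $C_i = C_j$ if and only if $p_i$ and $p_j$ lie on opposite sides of the pseudocircle through the other three points. A type-$(5,0)$ or $(4,1)$ pattern would therefore prescribe, for many triples, on which side of $l_{\{i,j,k\}}$ the remaining two points lie; I would feed these prescriptions into the pencil analysis of the previous paragraph --- applied to the triple of pseudocircles through each common pair $\{p_i,p_j\}$, which behave as three great circles through two poles --- to pin down the positions of the points within each pencil, and then extract a contradiction from the consistency that the pseudocircles shared between different pencils must satisfy.

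The step I expect to be the main obstacle is this local-to-global analysis of pseudocircles through common points, above all the lemma that three pseudocircles pairwise meeting in a common pair of points cut $S^2$ into six lune-shaped cells --- here it is essential that (P3$^2$) lets one upgrade ``at most twice'' to ``exactly twice, transversally''. Granting that lemma, cocircuit elimination is immediate, and ruling out type-$(5,0)$ and type-$(4,1)$ circuits reduces to a finite, elementary (if somewhat tedious) bookkeeping over the pencils determined by the pairs of points.
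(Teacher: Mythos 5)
Your proof of the cocircuit axioms (C0)--(C2) and the elimination axiom (C3$'$) is correct and in substance the same as the paper's. Both rest on the observation that any two (or three) pseudocircles through a common pair of $P$-points meet exactly at those two points, transversally, by (P3$^2$); the paper passes directly to the four cells cut out by $l_\lambda$ and $l_\mu$, while you make the six-lune picture for the whole pencil $l_\lambda,l_\mu,l_\nu$ explicit. Your ``interleaving at $p_a$'' argument is a clean way to justify a step the paper treats as pictorially obvious, and the rest of your elimination argument is sound.

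The matroid-polytope half is where the proposal falls short of a proof. The reduction is fine: restriction to a $5$-element subset is again a $2$-weak PPC configuration in general position, the circuit-type characterization (matroid polytope iff no circuit of type $(5,0)$ or $(4,1)$) is correct, and the orthogonality translation ($C_i = C_j$ iff $p_i,p_j$ are separated by the pseudocircle through the other three) is right. But the actual contradiction is only gestured at: ``feed these prescriptions into the pencil analysis \dots and then extract a contradiction from the consistency that the pseudocircles shared between different pencils must satisfy.'' This is the load-bearing step, and it is nontrivial. Indeed, a single pencil gives \emph{no} contradiction: for a putative all-positive circuit, one can place $p_3,p_4,p_5$ at alternating boundary positions of the six lunes through $\{p_1,p_2\}$ so that all three ``separated'' conditions hold simultaneously, and the same is true for a $(4,1)$ pattern. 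So you genuinely need to propagate constraints between at least two pencils through curves they share, and the combinatorics of that propagation (e.g.\ what constraints $l_{1,2,4}$ and $l_{1,3,4}$, which share $p_1,p_4$, impose on each other) is exactly what is missing. The paper instead looks at the cell decomposition of $S^2$ by the \emph{four} pseudocircles through $\{p_1,p_2,p_3,p_4\}$ (pairwise meeting exactly twice by (P3$^2$), giving the ten-region ``tetrahedral'' picture), and checks directly that wherever $p_5$ lies, all five points are extreme; this is a concrete finite case analysis rather than a consistency argument across pencils. Your plan may well be completable, but as written it is a sketch with the hard part left open, and it is not evidently shorter than the paper's ten-region check.

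One smaller point worth flagging: even in the paper's approach, the claim that the four pseudocircles induce exactly the ten-region pattern of Figure~4 deserves a word of justification (it uses (P3$^2$) on each of the six pairs plus the WLOG normalization), and the analogous claim in your approach --- that every pencil of three curves through a common pair looks like three great circles through two poles --- you do justify carefully. So if you decide to carry the circuit-type plan through, keep that local-to-global lemma; you will need it repeatedly.
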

\begin{proof}
We first prove that  ${\cal C}^*({\cal P})$ fulfills the cocircuit axioms.
Clearly, ${\cal C}^*({\cal P}) = -{\cal C}^*({\cal P})$ and $0 \notin {\cal C}^*({\cal P})$ hold.
It is also clear that $X^0 = Y^0$ for $X, Y \in {\cal C}^*({\cal P})$ implies $X=Y$ or $X = -Y$.
Since $\{ X^0 \mid X \in {\cal C}^*({\cal P}) \}$ consists of all $3$-subsets of $E$ (i.e., it determines a uniform matroid of rank $4$),
it suffices to verify the modular cocircuit axiom (C3').
Take $\lambda, \mu \in \Lambda (E,3)$ with $|\bar{\lambda} \cap \bar{\mu} | = 2$ (let us assume that $\lambda_1 = \mu_1$, $\lambda_2 = \mu_2$).
Let $X,Y \in {\cal C}^*({\cal P})$  be ones of the cocircuits that correspond to $l_{\lambda}$ and $l_{\mu}$ respectively.
Remark that $l_{\lambda}$ and $l_{\mu}$ intersect exactly twice and induce four regions $I_{\lambda} \cap I_{\mu}$, $I_{\lambda} \cap O_{\mu}$,
$O_{\lambda} \cap I_{\mu}$, and $O_{\lambda} \cap O_{\mu}$.
By reversing the insides and outsides of $l_{\lambda}$ and $l_{\mu}$ appropriately (i.e., by retaking $q$ appropriately), we can assume $p_{\lambda_3} \in O_{\mu}$, $p_{\mu_3} \in O_{\lambda}$.
Let $e \in S(X, Y)$.
If $p_e \in  I_{\mu} \cap I_{\lambda}$ or $p_e \in O_{\mu} \cap O_{\lambda}$, we have $S(X,Y) = \{ f \in E \mid p_f \in  (O_{\mu} \cap O_{\lambda}) \cup (I_{\mu} \cap I_{\lambda}) \}$  (see Figure~\ref{fig:discussion1a}).
Without loss of generality, let us assume $p_{\lambda_3} \in O_{\lambda_1,\lambda_2,e}$. Then, we have $O_{\lambda_1,\lambda_2,e} \supset I_{\lambda} \cap O_{\mu}$ and $I_{\lambda_1,\lambda_2,e} \supset O_{\lambda} \cap I_{\mu}$.
If we denote by $Z$ the cocircuit that corresponds to $l_{\lambda_1,\lambda_2,e}$ and satisfies $Z(\lambda_3) = Y(\lambda_3)$,
this implies $Z^+ \supset X^+ \cap Y^+$ and $Z^- \supset X^- \cap Y^-$.
If $p_e \in O_{\lambda} \cap I_{\mu}$ or $p_e \in I_{\lambda} \cap O_{\mu}$,
then we have $S(X,Y) = \{ f \in E \mid p_f \in  (O_{\lambda} \cap I_{\mu}) \cup (I_{\lambda} \cap O_{\mu}) \}$ (see Figure~\ref{fig:discussion1b}).
Similarly, we obtain $Z^+ \supset X^+ \cap Y^+$ and $Z^- \supset X^- \cap Y^-$.
Therefore, ${\cal M} = (E,{\cal C}^*({\cal P}))$ is an oriented matroid of rank $4$.

\begin{figure}[h]
\begin{minipage}[t]{0.5\columnwidth}
\centering 
\includegraphics[scale=0.25, bb = 40 240 540 820, clip]{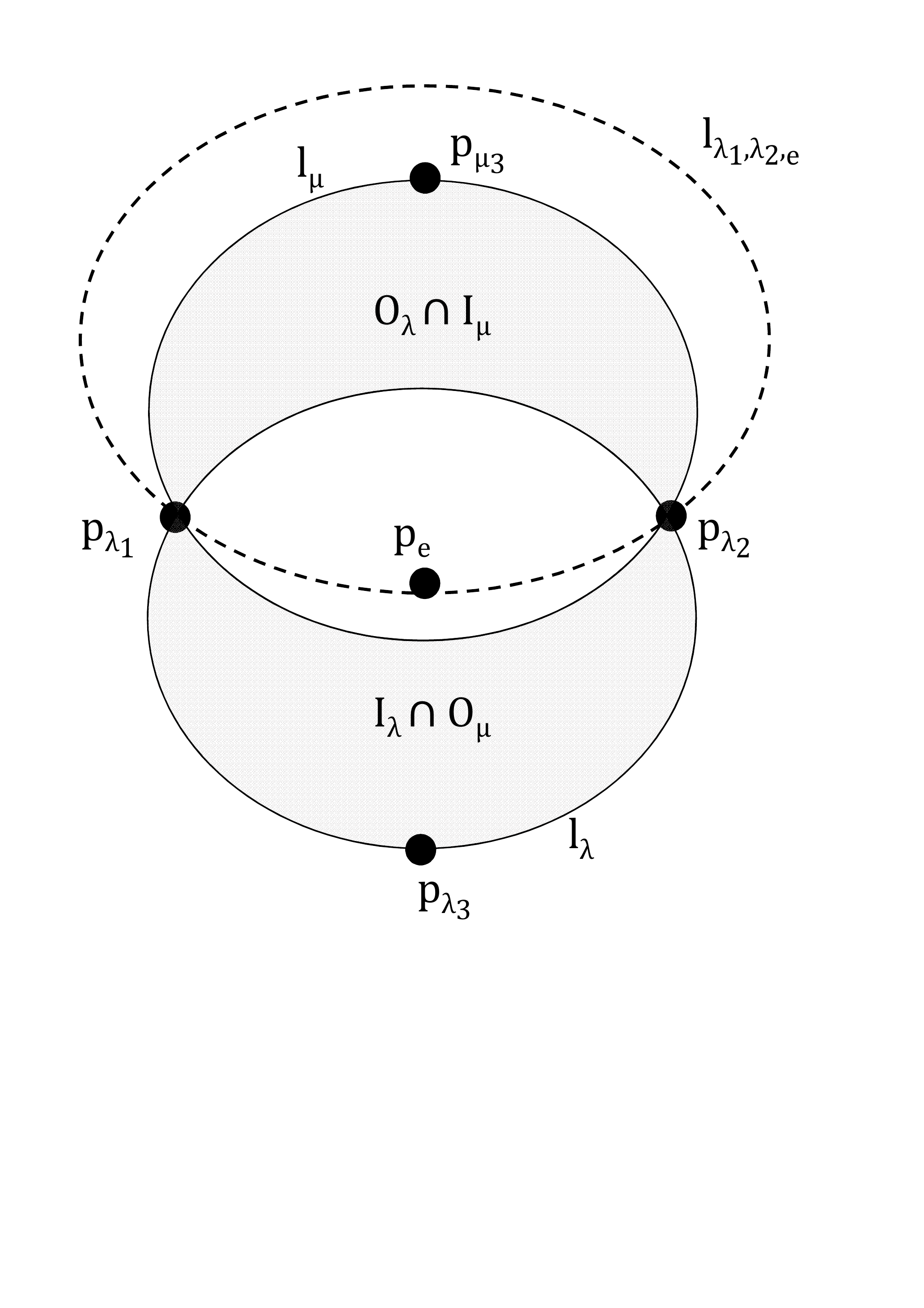} 
\caption{$p_e \in  I_{\mu} \cap I_{\lambda}$}
\label{fig:discussion1a}
\end{minipage}
\begin{minipage}[t]{0.5\columnwidth}
\centering 
\includegraphics[scale=0.25, bb = 50 150 500 720, clip]{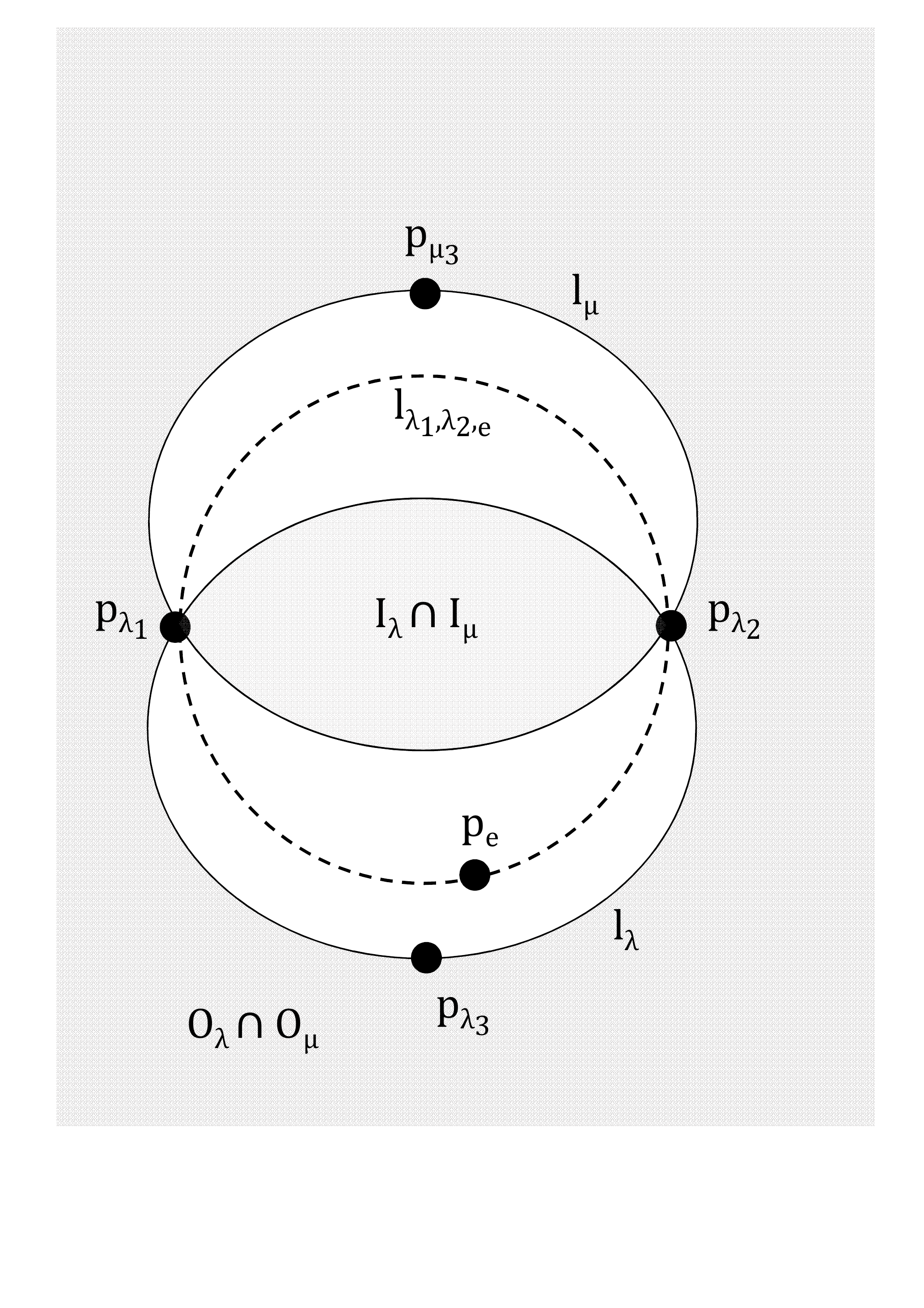} 
\caption{$p_e \in  I_{\mu} \cap O_{\lambda}$}
\label{fig:discussion1b}
\end{minipage}
\end{figure}

Next, assume that ${\cal M}$ is not a matroid polytope.
Then, by the oriented-matroid version of Caratheodory's theorem (see \cite[Chapter 9]{BLSWZ99}), there exists $Q = \{ \lambda_1, \dots, \lambda_5 \} \subset E$ such that ${\cal M}[Q]$ is not a matroid polytope.
Without loss of generality, we assume that $p_{\lambda_4} \in I_{\lambda_1,\lambda_2,\lambda_3}$ and $p_{\lambda_3} \in I_{\lambda_1,\lambda_2,\lambda_4}$.
Then, the curves $l_{\lambda_1,\lambda_2,\lambda_3}$, $l_{\lambda_1,\lambda_2,\lambda_4}$, $l_{\lambda_1,\lambda_3,\lambda_4}$, and $l_{\lambda_2,\lambda_3,\lambda_4}$ induce ten regions (see Figure~\ref{fig:ten_regions}).
\begin{figure}[h]
\centering 
\includegraphics[scale=0.25, bb = 30 206 512 729, clip]{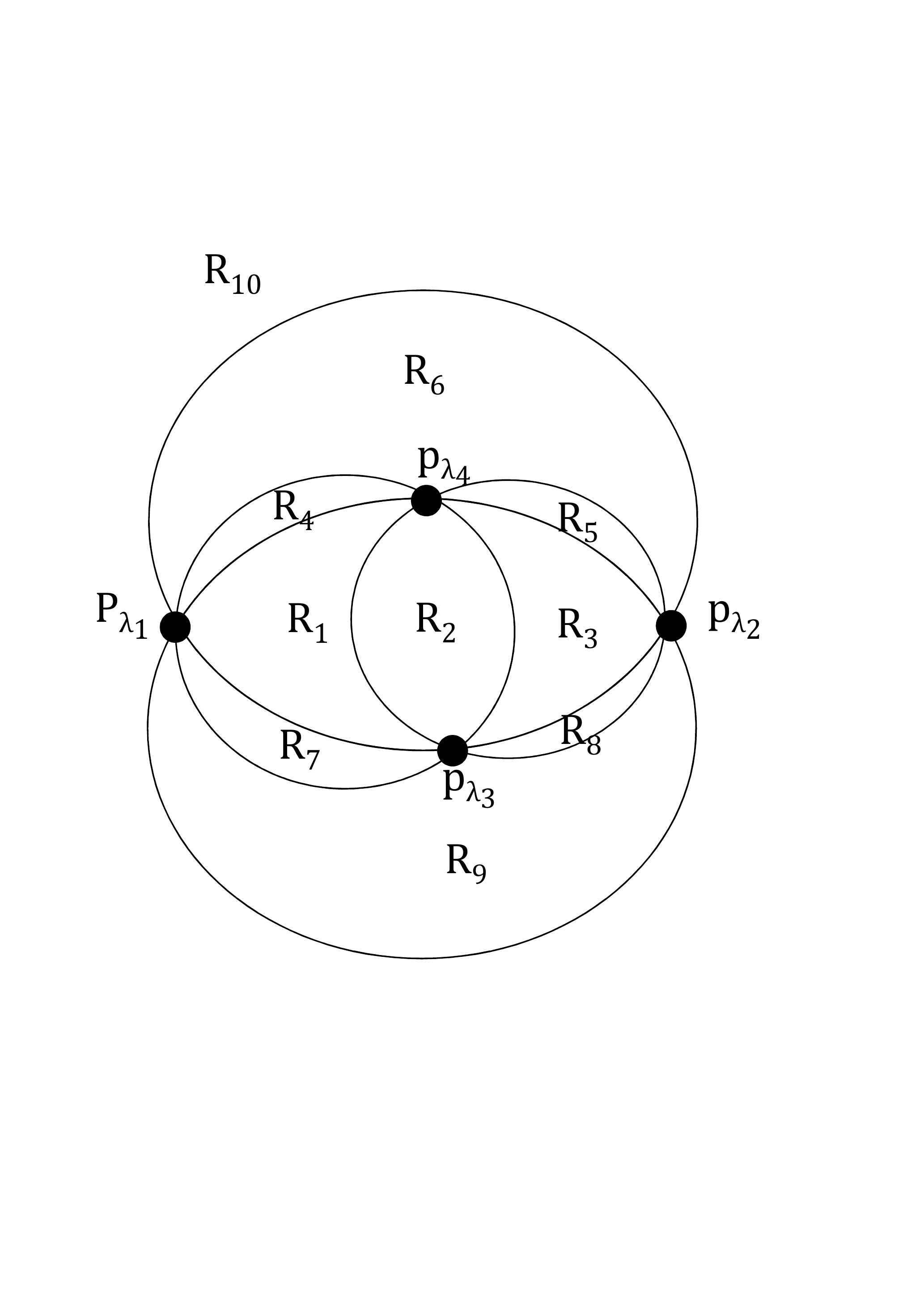} 
\caption{Ten regions induced by the curves $l_{\lambda_1,\lambda_2,\lambda_3}$, $l_{\lambda_1,\lambda_2,\lambda_4}$, $l_{\lambda_1,\lambda_3,\lambda_4}$, and $l_{\lambda_2,\lambda_3,\lambda_4}$}
\label{fig:ten_regions}
\end{figure}
We make a case analysis on the region the point $p_{\lambda_5}$ lies on.
If $p_{\lambda_5} \in I_{\lambda_1,\lambda_2,\lambda_3} \cap  I_{\lambda_1,\lambda_2,\lambda_4} \cap  I_{\lambda_1,\lambda_3,\lambda_4} \cap  I_{\lambda_2,\lambda_3,\lambda_4}$,
then the cocircuits $X_{\lambda_1,\lambda_2,\lambda_3}$, $X_{\lambda_1,\lambda_2,\lambda_4}$, $X_{\lambda_1,\lambda_3,\lambda_5}$, $X_{\lambda_1,\lambda_4,\lambda_5}$, $X_{\lambda_2,\lambda_3,\lambda_5}$, and $X_{\lambda_2,\lambda_4,\lambda_5}$
are non-negative or non-positive.  Without loss of generality, we assume that all of them are non-negative.
Then, the covector $X_{\lambda_1,\lambda_2,\lambda_3} \circ X_{\lambda_1,\lambda_4,\lambda_5}$ indicates that $e_1$ is an extreme point of ${\cal M}[Q]$.
Similarly, $e_2, e_3, e_4, e_5$ are shown to be extreme points.
This leads to that ${\cal M}[Q]$ is a matroid polytope, which is a contradiction.
By applying similar discussions to the other cases, we conclude that ${\cal M}$ must be a matroid polytope.
\end{proof}
\mbox{}
\\
\\
In what follows, we denote by ${\cal M}_{\cal P}$ the oriented matroid of a PPC configuration ${\cal P}$
and by $\chi_{\cal P}$ one of the two chirotopes of ${\cal M}_{\cal P}$.
From the chirotope, we can read off the relative positions of points and pseudocircles:
points $p_i$ and $p_j$ are on the same (resp. opposite) side of  the pseudocircle $l_{\lambda}$ if $\chi_{{\cal P}} (\lambda,i) \cdot \chi_{{\cal P}} (\lambda, j) = +1$ (resp. $\chi_{{\cal P}} (\lambda,i) \cdot \chi_{{\cal P}} (\lambda, j) = -1$).
Conversely, if we want to compute $\chi_{\cal P}$ from the configuration ${\cal P} = ((p_e)_{e \in E}, L)$, we first choose $b \in \Lambda (E, 4)$ arbitrarily and fix the value of $\chi_{\cal P} (b)$.
Then, we decide the other values of  $\chi_{\cal P}$ one by one by considering relative positions of points in $(p_e)_{e \in E}$ and curves in $L$, and by using Axiom (B1).

\begin{exmp}
Let ${\cal P}$ be the strong PPC configuration depicted in Figure~\ref{fig:configuration}.
The oriented matroid ${\cal M_P}$ coincides with the oriented matroid (matroid polytope) arising from the point configuration $Q$ depicted in Figure~\ref{fig:bipyramid}.
Let us assume that $\chi_{\cal P}(1,2,3,4) = +1$. Then, we have $\chi_{\cal P} (1,2,4,3) = -1$ by Axiom (B1).
In the PPC configuration ${\cal P}$, the points $p_3$ and $p_5$ are on the same side of the curve $l_{1,2,4}$, which implies that $\chi_{\cal P}(1,2,4,5) = \chi_{\cal P}(1,2,4,3) = -1$.
Correspondingly, in the point configuration $Q$, the points $q_3$ and $q_5$ are on the same side of the hyperplane $H_{1,2,4}$ spanned by $q_1,q_2,q_4$ (and thus $\triangle{q_1q_2q_4}$ is a facet of the convex hull of $Q$).
On the other hand, the points $p_1$ and $p_5$ are on opposite sides of the curve $l_{2,3,4}$ and thus we have $\chi_{\cal P}(2,3,4,5) = -\chi_{\cal P}(2,3,4,1) = \chi_{\cal P}(1,2,3,4) = +1$.
Correspondingly, the points $q_1$ and $q_5$ are on opposite sides of the hyperplane $H_{2,3,4}$ spanned by $q_2,q_3,q_4$.

\begin{figure}[h]
\begin{minipage}[t]{0.5\columnwidth}
\centering
\includegraphics[scale=0.20, bb = 47 175 590 830, clip]{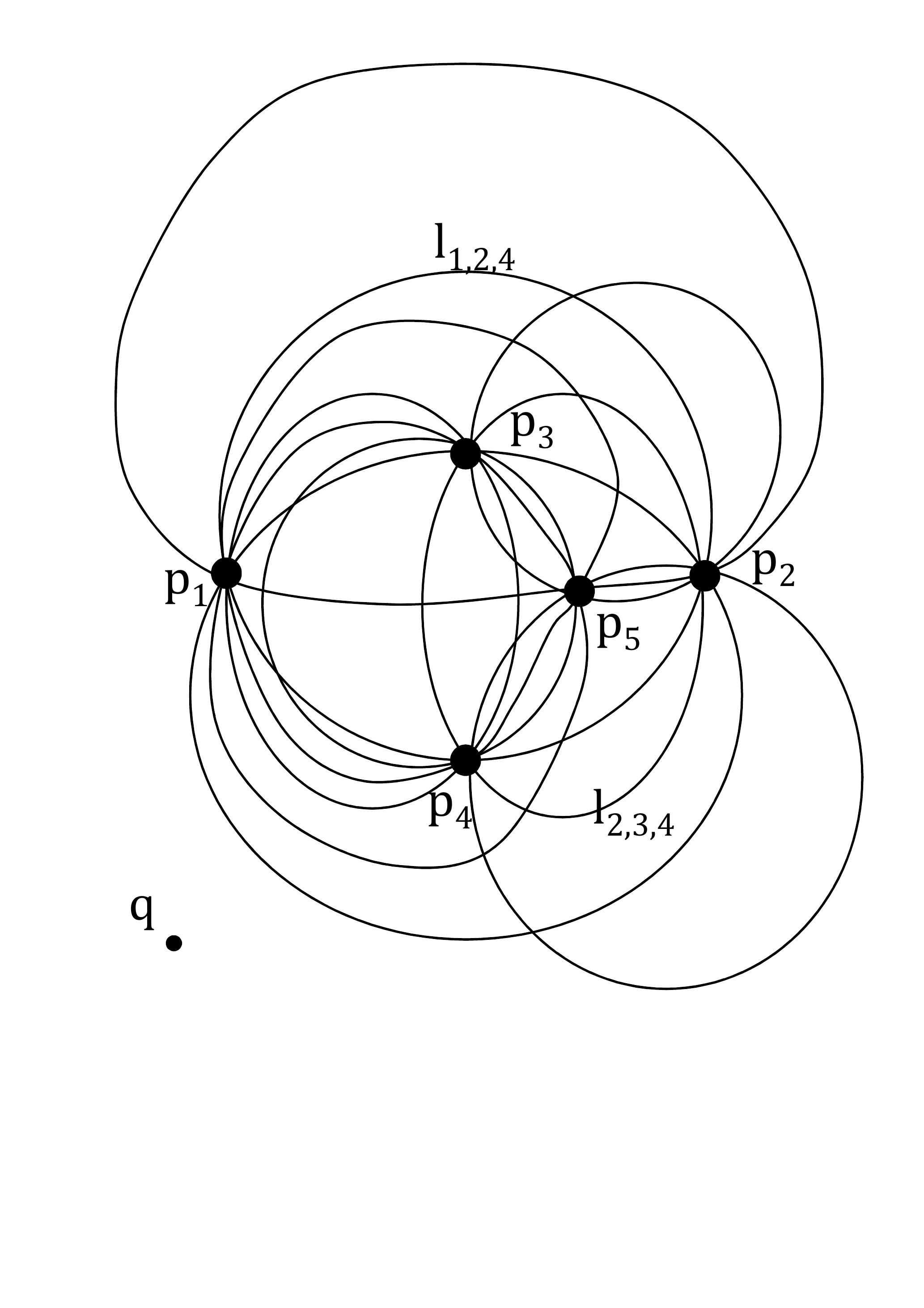} 
\caption{A strong PPC configuration ${\cal P}$}
\label{fig:configuration}
\end{minipage}
\begin{minipage}[t]{0.5\columnwidth}
\centering
\includegraphics[scale=0.20, bb = 60 220 510 810, clip]{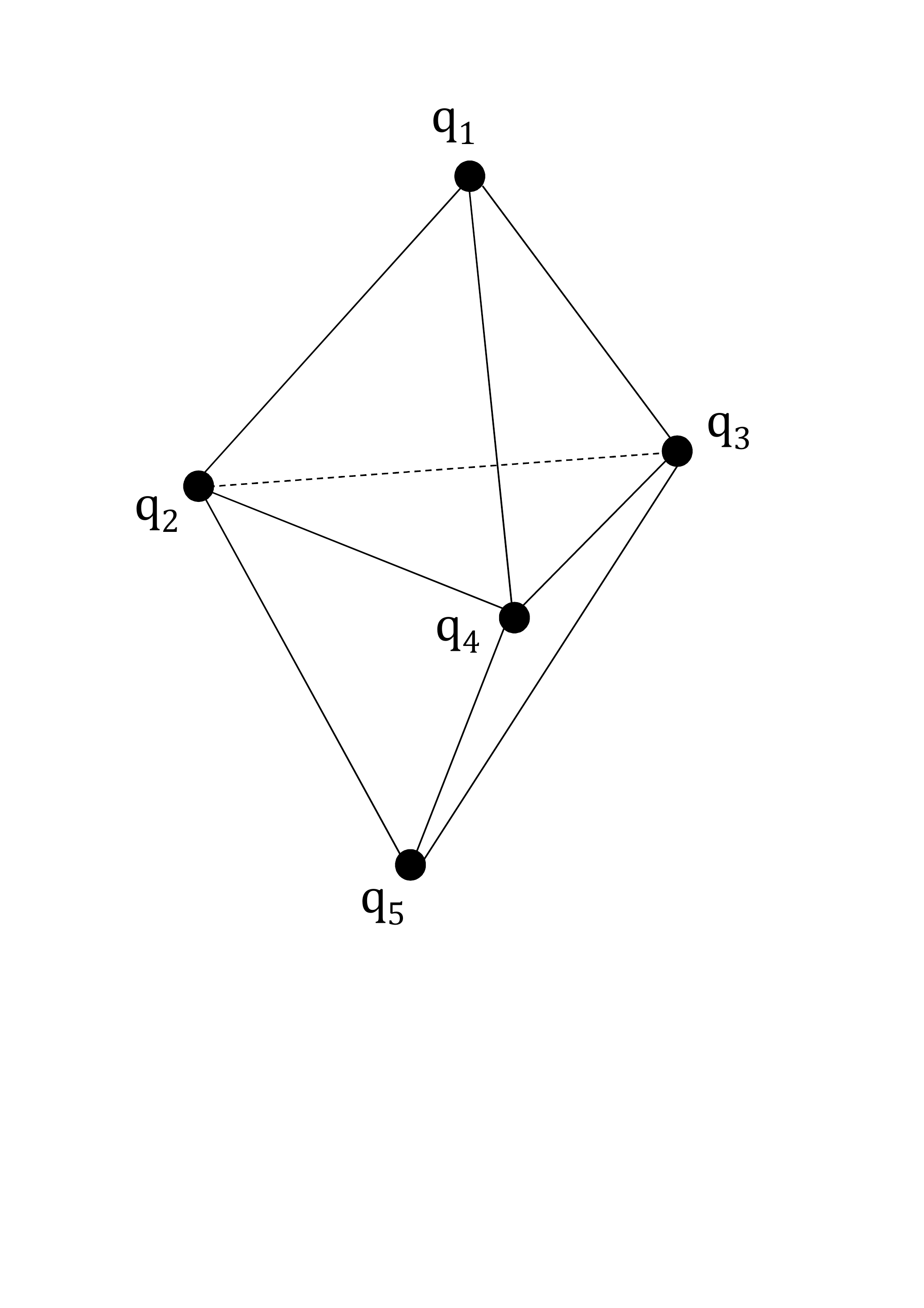} 
\caption{A point configuration $Q$}
\label{fig:bipyramid}
\end{minipage}
\end{figure}

\end{exmp}

\section{Main theorem}
In this section, we prove the main theorem, which is the converse of Proposition \ref{prop:OM}.
\begin{thm}
\label{main_thm}
For every uniform matroid polytope ${\cal M} = (E, {\cal C}^*)$ of rank $4$, there is a $2$-weak PPC configuration ${\cal P}$ in general position with ${\cal C}^* = {\cal C}^*({\cal P})$.
\end{thm}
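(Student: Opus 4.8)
The plan is to argue by induction on $n := |E|$. When $n = 4$ there is a unique uniform matroid polytope of rank $4$ on four elements, and four points in general position on $S^2$ together with the four geometric circles through their triples form a strong — hence $2$-weak — PPC configuration in general position representing it. For the inductive step, fix a uniform matroid polytope $\mathcal{M} = (E,\mathcal{C}^*)$ of rank $4$ with $|E| = n \geq 5$, pick any $e \in E$, and let $\mathcal{N} := \mathcal{M}[E \setminus e]$; this is again a uniform matroid polytope of rank $4$ (restrictions of matroid polytopes on at least four elements are again matroid polytopes, since deleting elements preserves acyclicity of contractions), so by the induction hypothesis there is a $2$-weak PPC configuration $\mathcal{P} = ((p_f)_{f \in E \setminus e}, L)$ in general position with $\mathcal{C}^*(\mathcal{P}) = \mathcal{C}^*_{\mathcal{N}}$. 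It then suffices to prove a \emph{single-element extension lemma}: one can enlarge $\mathcal{P}$, without modifying it, by a point $p_e$ and a simple closed curve $l_\lambda$ for every $\lambda \in \Lambda(E,3)$ with $e \in \bar\lambda$, so that the result $\mathcal{P}'$ is a $2$-weak PPC configuration in general position with $\mathcal{C}^*(\mathcal{P}') = \mathcal{C}^*$.

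The lemma splits into two tasks. The first, and easier, is \textbf{placing the new point.} The curves $\{l_\mu : \mu \in \Lambda(E\setminus e,3)\}$ dissect $S^2$ into open cells, and the cocircuits of $\mathcal{M}$ through $e$ prescribe, for each $\mu$, the side of $l_\mu$ on which $p_e$ must lie (via the sign convention used to define $\mathcal{C}^*(\cdot)$). One has to check that the corresponding cell $R_e$ is nonempty; $p_e$ is then chosen in $R_e$, which automatically keeps the configuration in general position since $R_e$ misses every old curve. Nonemptiness of $R_e$ is forced by the fact that $\mathcal{M}$ is a genuine oriented matroid: the prescribed pattern of sides is consistent, and it is located by a case analysis in the spirit of the region/lens discussion in the proof of Proposition~\ref{prop:OM}.

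The second task — the crux — is \textbf{drawing the new pseudocircles.} For each pair $\{i,j\} \subset E \setminus e$ we must produce a simple closed curve $l_{\{i,j,e\}}$ through $p_i, p_j, p_e$ and through no other point of $P$, whose two sides separate the points $p_k$ ($k \in E \setminus \{i,j,e\}$) exactly as dictated by the cocircuit $X_{\{i,j,e\}}$ of $\mathcal{M}$, while respecting (P3$^2$). The only curves sharing at least two points of $P$ with $l_{\{i,j,e\}}$ are the old curves $l_{\{i,j,k\}}$ and the new curves $l_{\{i,k,e\}}$, $l_{\{j,k,e\}}$, so (P3$^2$) is equivalent to requiring that, for every pair of points of $P$, all curves through it form a \emph{pencil} — meeting pairwise, transversally, in exactly that pair and nowhere else. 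The plan is to build the new curves one pencil at a time, starting from arbitrary curves through the prescribed triples and repairing each by homotopies that push it around — never across — an obstructing point, monitoring the interaction with the already-drawn curves via the empty-lens vocabulary of the paper and the extra freedom that (P3$^2$) gives over the strong axiom (P3). Near each point $p_i$ the task is a rank-$3$ single-element extension problem (extending the local picture, which realizes $(\mathcal{M}/i)\setminus e$, to one realizing $\mathcal{M}/i$), solvable as a pseudoconfiguration of points by Goodman and Pollack's theorem~\cite{GP84}, with Ringel-type moves~\cite{R56} handling the re-routing isotopies; the delicate point is gluing these local solutions into globally consistent simple closed curves so that no two repairs conflict.

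Once all curves are in place, (P1) and general position hold by construction, (P2) and (P3$^2$) hold as above, and $\mathcal{C}^*(\mathcal{P}') = \mathcal{C}^*$ since the old cocircuits are untouched and the new ones were prescribed correctly; Proposition~\ref{prop:OM} then confirms that $\mathcal{P}'$ represents a uniform matroid polytope of rank $4$, which is necessarily $\mathcal{M}$. The principal obstacle throughout is this last step — the global routing of the new pseudocircles under the pencil and empty-lens constraints — while the rest is bookkeeping.
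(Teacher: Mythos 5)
Your approach differs fundamentally from the paper's and, as currently stated, has two genuine gaps, one of which is the very point the paper's construction is designed to avoid.

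\textbf{The point-placement step is unjustified and is actually hard.} The paper does not induct on $|E|$ at all: it places \emph{every} point of $P$ on $S^2$ \emph{arbitrarily} at the outset, and only then draws curves one at a time, in lexicographic order of $(i_1,i_2,i_3)\in\Lambda^*(E,3)$. Because the points are free, the consistency burden falls entirely on the curve-drawing step, and that step is controlled locally by the ``closest'' already-drawn curves $l_{s,t,u_+}$ and $l_{s,t,u_-}$, which pin the new arc into a small corridor. Your deletion-based induction inverts this: you fix a completed configuration for $\mathcal{M}[E\setminus e]$ and then must find a place for $p_e$. You claim the cell $R_e$ cut out by the sign prescriptions $X_\mu(e)$, $\mu\in\Lambda(E\setminus e,3)$, is nonempty, but this does not follow from $\mathcal{M}$ being an oriented matroid. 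The curves in a $2$-weak PPC configuration do \emph{not} form a pseudocircle arrangement: two curves sharing fewer than two points of $P$ may cross arbitrarily many times, so there is no Folkman--Lawrence--type cell complex guaranteeing that every abstract sign vector is realized by a nonempty region. The intersection $\bigcap_\mu\{\text{prescribed side of }l_\mu\}$ could perfectly well be empty. Citing ``a case analysis in the spirit of Proposition~\ref{prop:OM}'' does not help; that proposition reads off an oriented matroid from an already-valid configuration, which is the reverse direction.

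\textbf{The curve-drawing step is acknowledged but not done.} You describe the global routing — drawing all curves $l_{\{i,j,e\}}$ so that (P3$^2$) holds pairwise, simple closedness is preserved, and the cocircuits come out right — as ``the crux'' and ``the delicate point,'' and leave it at the level of ``Ringel-type moves'' and ``gluing local solutions.'' That is precisely the content that needs a proof; in the paper this is the substance of Claims~1 and~2, where the lexicographic order plus the $l_{s,t,u_\pm}$ corridors make the interaction between a new arc and the previously drawn curves finite and checkable, and each potential obstruction (the cases (A-1)--(A-3) and (B-1)--(B-4)) is refuted by producing a rank-$3$ contraction of $\mathcal{M}$ with too few or too many facets, contradicting acyclicity or uniformity. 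Your sketch does not reduce to a bounded collection of cases, and the appeal to Goodman--Pollack near each $p_i$ only handles the rank-$3$ pencil locally; nothing ensures the local extensions agree on the overlaps or splice into simple closed curves respecting all pencils simultaneously. In short, the deletion induction buys a cleaner statement of the base case but moves the difficulty into exactly the two steps you leave open, whereas the paper's ``all points first, curves in lexicographic order'' scheme is chosen precisely so that both difficulties can be dispatched by local lens arguments against the matroid-polytope axioms.
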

We will prove this theorem by inductive construction, but the proof is a bit technical.
Thus, we first give a geometric intuition as to why it should be true.
Let $P=(p_e)_{e \in E}$ be a point configuration on the sphere $S^2$ (then, $P$ determines a matroid polytope of rank $4$). 
We assume that $P$ does not contain the north pole of $S^2$ by rotating $P$ slightly if necessary.
Then, for each $p_i,p_j,p_k \in P$ that span a hyperplane, denoted by $H_{i,j,k}$, we have
\begin{align*}
&\text{$p_e$ is above (resp. below, on) $H_{i,j,k}$}\\
& \text{$\Leftrightarrow$ $p_e$ is outside (resp. inside, on) the circle $C_{i,j,k}$ on $S^2$ passing through $p_i, p_j, p_k$.}
\end{align*}
Here, the outside (resp. inside) of $C_{i,j,k}$ is defined to be the side of the circle $C_{i,j,k}$  that contains (resp. does not contain) the north pole of $S^2$. 
Therefore, the oriented matroid arising from a point configuration on  $S^2$ can be represented as a strong PPC configuration.
This can be generalized to realizable matroid polytopes of rank $4$ as follows.
Given a realizable matroid polytope ${\cal M}$ of rank $4$, we take a point configuration $P \subset \mathbb{R}^3$ that realizes ${\cal M}$, and consider
a convex body $K$ that circumscribes the convex hull of $P$, and then apply the $K$-stereographic projection $f_K \colon K \rightarrow \mathbb{R}^2$ (see Figure~\ref{fig:stereograph}).
Then, for each $p_i,p_j,p_k \in P$ that span a hyperplane, denoted by $H_{i,j,k}$, we have
\begin{align*}
&\text{$p_e$ is above (resp. below, on) $H_{i,j,k}$}\\
& \text{$\Leftrightarrow$ $f_K(p_e)$ is outside (resp. inside, on) $f_K(K \cap H_{i,j,k})$ in $\mathbb{R}^2$,}
\end{align*}
where the outside and inside are defined analogously to the case of $S^2$.
By applying the inverse stereographic projection to $f_K(P)$ and $f_K(K \cap H_{i,j,k})$ ($i,j,k \in E$), we obtain a strong PPC configuration that realizes ${\cal M}$.
\\
\begin{figure}[h]
\centering 
\includegraphics[scale=0.25, bb = 10 140 590 560, clip]{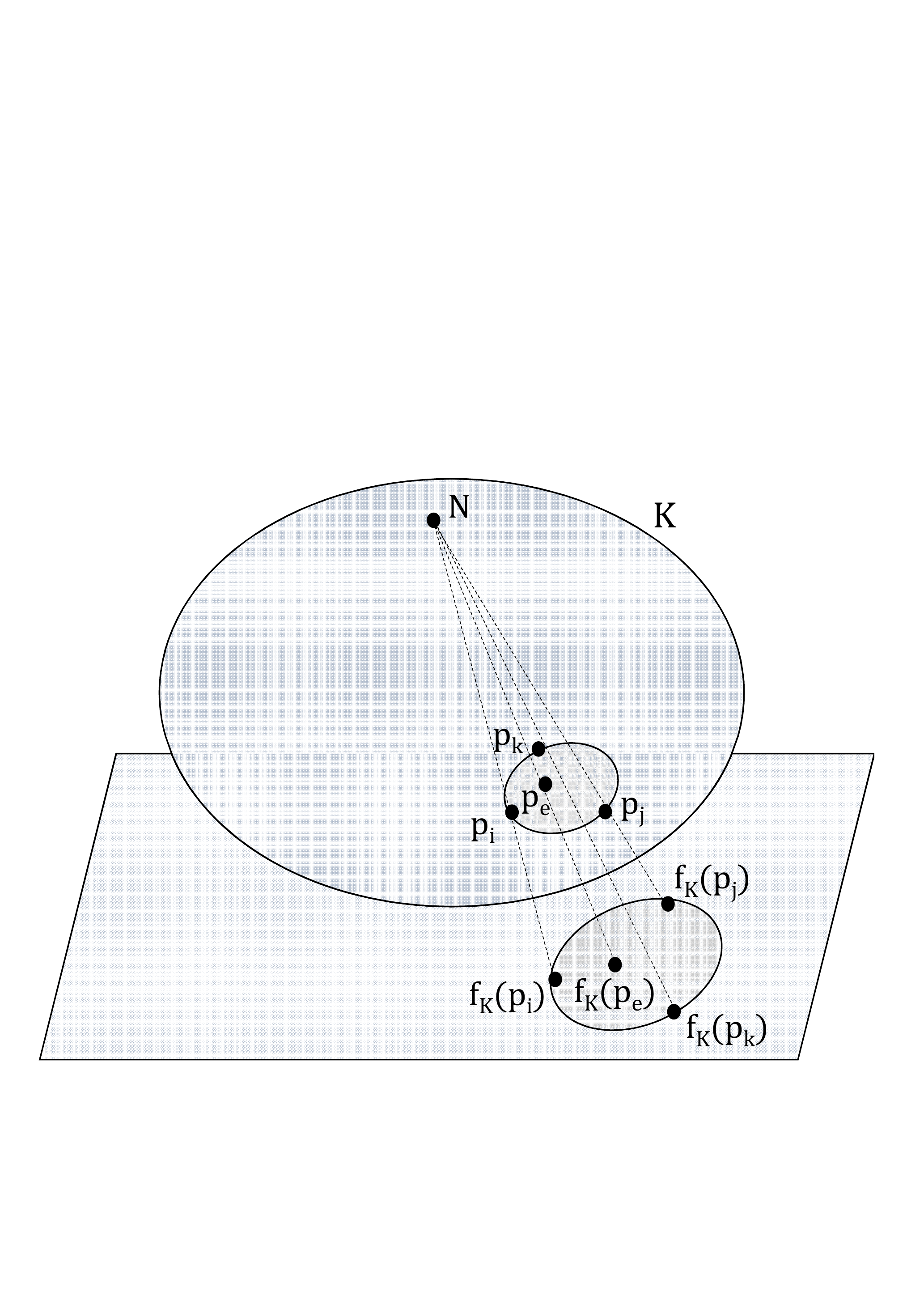} 
\caption{$K$-stereographic projection}
\label{fig:stereograph}
\end{figure}
\\
\noindent
{\bf (Proof of Theorem \ref{main_thm})}

Let $\chi$ be one of the two chirotopes of ${\cal M}$.
We introduce a linear order, denoted by $<$, on $E$ arbitrarily.
Then, we put points $P=(p_{e})_{e \in E}$ in $S^2$ arbitrarily and draw curves $l_{i_1,i_2,i_3}$ one by one in the lexicographic order of  indices $(i_1,i_2,i_3) \in \Lambda^* (E,3)$.
Suppose that curves $l_{j_1,j_2,j_3}$, for $(j_1,j_2,j_3) < (i_1,i_2,i_3)$, are already drawn so that Axiom (P3$^2$) of $2$-weak PPC configurations is satisfied and that the relative positions of the curves with points in $P$ are \emph{consistent} with $\chi$,
i.e., each of the already drawn curves $l_{\lambda}$ separates the points $p_e$ with $\chi (\lambda, e) = +1$ and the points $p_f$ with $\chi (\lambda, f) = -1$.
Then, we insert a curve $l_{i_1,i_2,i_3}$ in the following way.

For each $(i_1,i_2,i_3) \in \Lambda (E,3)$, let $X_{i_1,i_2,i_3}$ be one of the two cocircuits with $X_{i_1,i_2,i_3}^0 = \{ i_1,i_2,i_3 \}$.
For each $(s,t,u) \in \{ (i_1,i_2,i_3), (i_2,i_3,i_1), (i_3,i_1,i_2) \}$, there are at most two cocircuits $X_{s,t,u'} (\neq X_{i_1,i_2,i_3})$ such that
$u' \in [u]_E$ and $S(X_{i_1,i_2,i_3}, X_{s,t,u'}) \cap  [u]_E  = \emptyset$.
If we assume that there are two such cocircuits $X_{s,t,u_+}, X_{s,t,u_-}$, we have $S(X_{s,t,u_+},X_{s,t,u_-}) \cap [u]_E = \{ u \}$.
(In terms of the (partial) 2-weak PPC configuration, we consider $u_+,u_- \in [u]_E$ such that $l_{s,t,u_+},l_{s,t,u_-}$ are the curves ``closest''  to $p_u$ among the already drawn curves passing through $p_s$ and $p_t$.
The curves $l_{s,t,u_+},l_{s,t,u_-}$ induce four connected regions. If we denote by $R$ the region on which $p_u$ lies, we have to connect $p_s$ and $p_u$, and $p_t$ and $p_u$ inside $R$.)
We hereafter assume that $X_{s,t,u_+}$ and $X_{s,t,u_-}$ exist for each $(s,t,u) \in \{ (i_1,i_2,i_3), (i_2,i_3,i_1), (i_3,i_1,i_2) \}$ because otherwise the following discussion becomes simpler.

To construct $l_{i_1,i_2,i_3}$, we first construct arcs connecting $p_{i_1}$ and $p_{i_2}$, connecting $p_{i_2}$ and $p_{i_3}$, and connecting $p_{i_3}$ and $p_{i_1}$, and concatenate them afterwards.
Let $(s,t,u) \in \{ (i_1,i_2,i_3), (i_2,i_3,i_1), (i_3,i_1,i_2) \}$. We give a construction of an arc connecting $p_s$ and $p_t$. 
Since $l_{s,t,u_+}$ and $l_{s,t,u-}$ intersect exactly twice, they induce four connected regions $I_{s,t,u_+} \cap I_{s,t,u_-}$, $I_{s,t,u_+} \cap O_{s,t,u_-}$,
$O_{s,t,u_+} \cap I_{s,t,u_-}$, and $O_{s,t,u_+} \cap O_{s,t,u_-}$.
By reversing the sides of $l_{s,t,u_+}$ and $l_{s,t,u_-}$ appropriately, we assume $p_{u_+} \in I_{s,t,u_-}$ and  $p_{u_-} \in I_{s,t,u_+}$.
Then, we have $p_{u} \in O_{s,t,u_+} \cap O_{s,t,u_-}$ or  $p_u \in I_{s,t,u_+} \cap I_{s,t,u_-}$.
Note that no curve $l_{s,t,k}$ $(k < u)$ intersects $O_{s,t,u_+} \cap O_{s,t,u_-}$ or $I_{s,t,u_+} \cap I_{s,t,u_-}$.
We connect $p_s$ and $p_t$ by an arc $\gamma_{s,t}$ satisfying the following conditions, where $\mathring{\gamma}_{s,t} := \gamma_{s,t} \setminus \{ p_s, p_t \}$.
\begin{multicols}{2}
\begin{minipage}{0.45\textwidth}
Case (I): $p_u \in O_{s,t,u_+} \cap O_{s,t,u_-}$ (see Figure~\ref{fig:discussion2a}).
\begin{itemize}[leftmargin=*]
\setlength{\itemsep}{-3pt}
\item $\mathring{\gamma}_{s,t} \subset I_{s,t,u_+} \cap I_{s,t,u_-}$.
\item  $\gamma_{s,t}$ separates points $p_a$ in $\overline{I_{s,t,u_+}} \cup \overline{I_{s,t,u_-}}$ with $\chi(s,t,u,a) = +1$
from points $p_b$ in $\overline{I_{s,t,u_+}} \cup \overline{I_{s,t,u_-}}$ with $\chi(s,t,u,b) = -1$.
\item  $\mathring{\gamma}_{s,t}$ intersects none of $l_{s,t,a}$, $l_{s,u,b}$, $l_{t,u,c}$ ($a < u, b < t, c < s$).
\end{itemize}
\end{minipage}
\begin{minipage}{0.45\textwidth}
Case (II): $p_u \in I_{s,t,u_+} \cap I_{s,t,u_-}$ (see Figure~\ref{fig:discussion2b}).
\begin{itemize}[leftmargin=*]
\setlength{\itemsep}{-3pt}
\item $\mathring{\gamma}_{s,t} \subset O_{s,t,u_+} \cap O_{s,t,u_-}$.
\item  $\gamma_{s,t}$ separates points $p_a$ in $\overline{O_{s,t,u_+}} \cup \overline{O_{s,t,u_-}}$ with $\chi(s,t,u,a) = +1$
from points $p_b$ in $\overline{O_{s,t,u_+}} \cup \overline{O_{s,t,u_-}}$ with $\chi(s,t,u,b) = -1$.
\item  $\mathring{\gamma}_{s,t}$ intersects none of $l_{s,t,a}$, $l_{s,u,b}$, $l_{t,u,c}$ ($a < u, b < t, c < s$).
\end{itemize}
\end{minipage}
\end{multicols}

\begin{figure}[h]
\begin{minipage}[t]{0.5\columnwidth}
\centering 
\includegraphics[scale=0.25, bb = 20 270 520 810, clip]{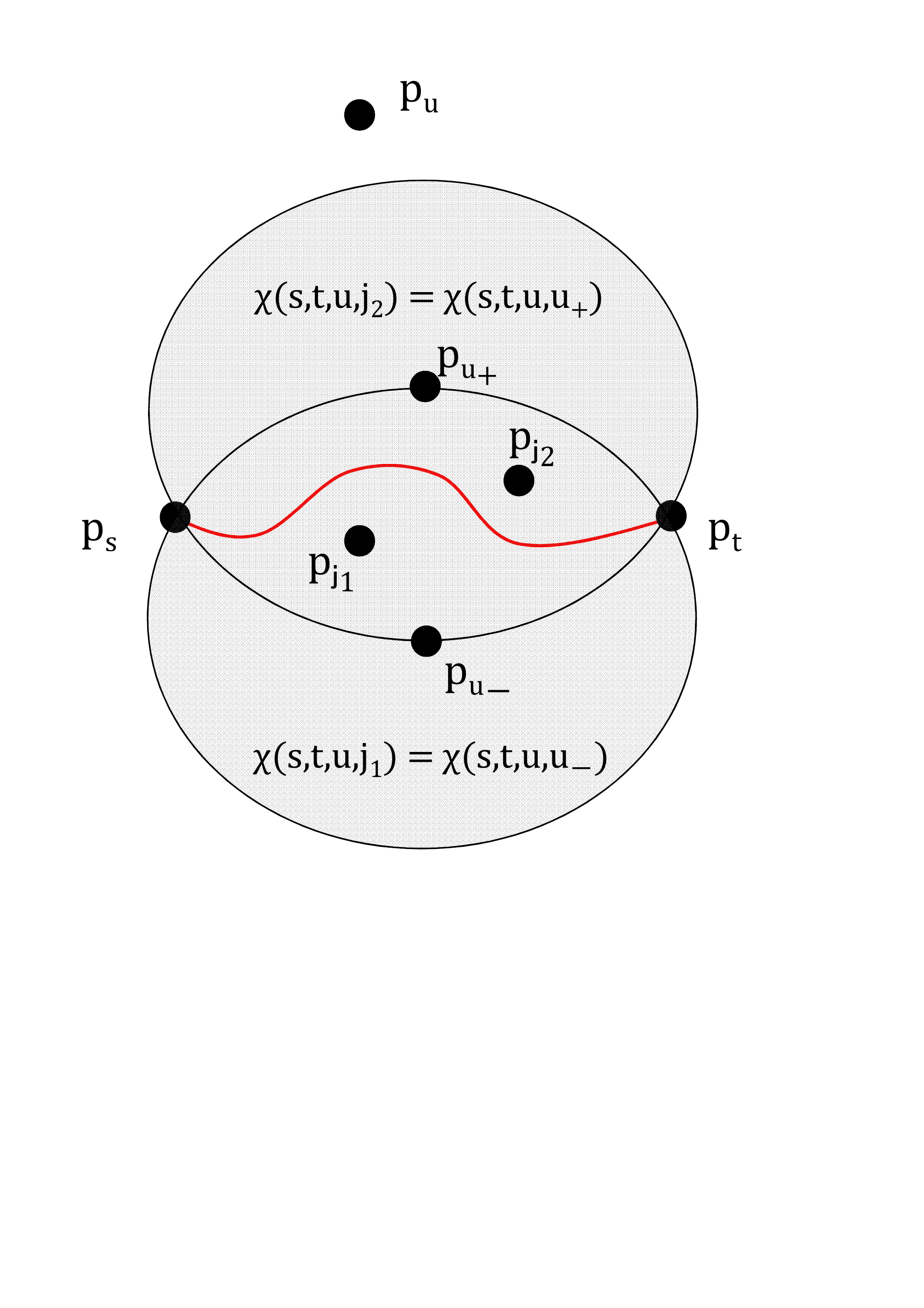} 
\caption{Case (I)}
\label{fig:discussion2a}
\end{minipage}
\begin{minipage}[t]{0.5\columnwidth}
\centering 
\includegraphics[scale=0.25, bb = 20 200 570 820, clip]{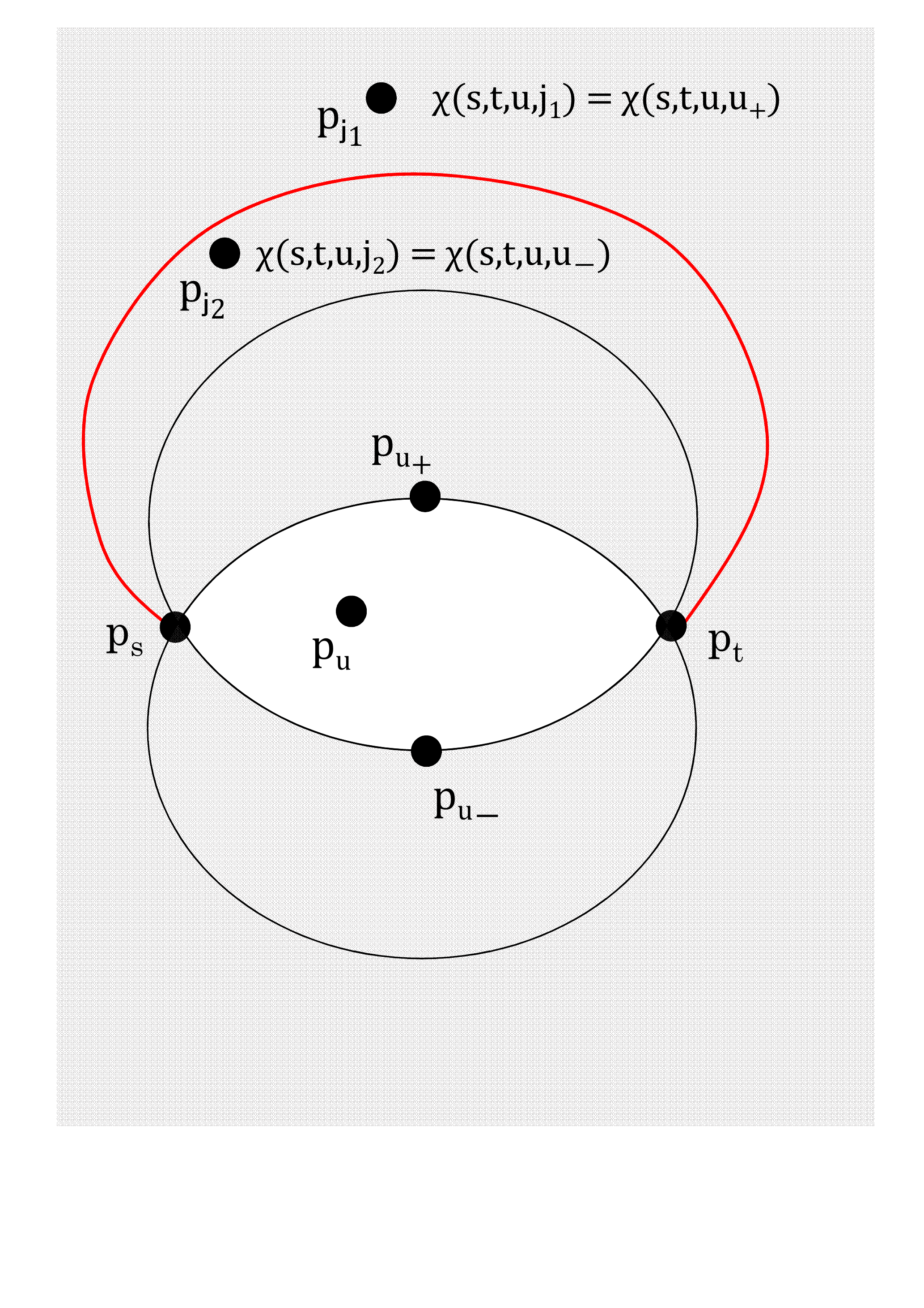} 
\caption{Case (II)}
\label{fig:discussion2b}
\end{minipage}
\end{figure}

\noindent
{\bf Claim 1.}
The curve $\gamma_{s,t}$ can be constructed to satisfy the above conditions.
\\
\\
\noindent
{\bf Proof.}
We assume that a required drawing is impossible.
Then one of the following conditions must hold.
\begin{itemize}
\item[(A-1)] There exist $j_1, j_2 \in E$ such that  $p_{j_1}, p_{j_2} \in \overline{I_{s,t,u_+}} \cap \overline{O_{s,t,u_-}}$  with $\chi(s,t,u,j_1) \cdot \chi(s,t,u,j_2) = -1$. (see Figure~\ref{fig:discussion3_1})
\item[(A-2)] There exist $j_1, j_2 \in E$ such that  $p_{j_1}, p_{j_2} \in \overline{O_{s,t,u_+}} \cap \overline{I_{s,t,u_-}}$ with $\chi(s,t,u,j_1) \cdot \chi(s,t,u,j_2) = -1$. (see Figure~\ref{fig:discussion3_2})
\item[(A-3)] There exist $j_1, j_2 \in E$ such that $p_{j_1} \in \overline{I_{s,t,u_+}} \cap \overline{O_{s,t,u_-}},  p_{j_2} \in \overline{O_{s,t,u_+}} \cap \overline{I_{s,t,u_-}}$ and $\chi(s,t,u,j_1) \cdot \chi(s,t,u,j_2) = +1$.
(see Figure~\ref{fig:discussion3_3})
\item[(B-1)]  For some $l_{s',t',a}$ and $l_{s',u',b}$ ($(s',t',u') \in \{ (i_1,i_2, i_3), (i_2,i_1, i_3), (i_3,i_1,i_2) \}$) with $(s',t',a)^*, (s',u',b)^* < (i_1,i_2,i_3)$ and  $p_{t'} \in O_{s',u', b}$ and $p_{u'} \in I_{s',t',a}$, 
there exist $j_1, j_2 \in E$ such that $p_{j_1}, p_{j_2} \in \overline{O_{s',t',a}} \cap \overline{I_{s',u',b}}$ and $\chi(s',t',u',j_1) \cdot \chi (s',t',u',j_2) = -1$. (see Figure~\ref{fig:discussion3_5})
\item[(B-2)]  For some $l_{s',t',a}$ and $l_{s',u',b}$ ($(s',t',u') \in \{ (i_1,i_2, i_3), (i_2,i_1, i_3), (i_3,i_1,i_2) \}$) with $(s',t',a)^*, (s',u',b)^* < (i_1,i_2,i_3)$ and  $p_{t'} \in I_{s',u', b}$ and $p_{u'} \in O_{s',t',a}$, 
there exist $j_1, j_2 \in E$ such that $p_{j_1}, p_{j_2} \in \overline{I_{s',t', a}} \cap \overline{O_{s',u',b}}$ and $\chi(s',t',u',j_1) \cdot \chi (s',t',u',j_2) = -1$.  (see Figure~\ref{fig:discussion3_6})
\item[(B-3)]  For some $l_{s',t',a}$ and $l_{s',u',b}$ ($(s',t',u') \in \{ (i_1,i_2, i_3), (i_2,i_1, i_3), (i_3,i_1,i_2) \}$) with $(s',t',a)^*, (s',u',b)^* < (i_1,i_2,i_3)$ and  $p_{t'} \in I_{s',u', b}$ and $p_{u'} \in I_{s',t',a}$, 
there exist $j_1, j_2 \in E$ such that $p_{j_1}, p_{j_2} \in \overline{O_{s',t', a}} \cap \overline{O_{s',u',b}}$ and $\chi(s',t',u',j_1) \cdot \chi (s',t',u',j_2) = -1$. (see Figure~\ref{fig:discussion3_7})
\item[(B-4)]  For some $l_{s',t',a}$ and $l_{s',u',b}$ ($(s',t',u') \in \{ (i_1,i_2, i_3), (i_2,i_1, i_3), (i_3,i_1,i_2) \}$) with $(s',t',a)^*, (s',u',b)^* < (i_1,i_2,i_3)$ and  $p_{t'} \in O_{s',u', b}$ and $p_{u'} \in O_{s',t',a}$, 
there exist $j_1, j_2 \in E$ such that $p_{j_1}, p_{j_2} \in \overline{I_{s',t', a}} \cap \overline{I_{s',u',b}}$ and $\chi(s',t',u',j_1) \cdot \chi (s',t',u',j_2) = -1$. (see Figure~\ref{fig:discussion3_8})
\end{itemize}
We first consider Case (A-1).
Let $j \in \{ j_1, j_2 \}$ be such that $\chi (s,t,u,j) = \chi (s,t,u, u_+)$.
Without loss of generality, we assume $\chi (s,t, u_+, j) = +1$.
Since $p_{u_-} \in I_{s,t,u_+}$, we have $\chi (s,t,u_+, u_-) = +1$.
In addition, we have $\chi (s,t,u_+,u) =\sigma$, where $\sigma = +1$ if $p_u \in I_{s,t,u_-} \cap I_{s,t,u_+}$ and  $\sigma = -1$ if $p_u \in O_{s,t,u_-} \cap O_{s,t,u_+}$.
By considering the relative positions of the points $p_{u_+}, p_{j}, p_{u}$ with respect to $l_{s,t,u_-}$, we obtain $\chi (s,t,u_-,j) =  +1$ and $\chi (s,t,u_-,u) =-\sigma$.
By the choice of $j$, we have $\chi (s,t,u,u_+) = \chi (s,t,u,j) = -\sigma$.

From the (partial) information of $\chi$, we compute facets of  ${\cal N} := {\cal M}[s,t,u,u_+,u_-,j]/s$ (recall that $\{ \lambda_1, \lambda_2 \}$ is a facet of ${\cal N}$ 
if and only if the set $\{ \chi (s,\lambda_1, \lambda_2, x) \mid x \in \{ t,u,u_+,u_-,j \} \setminus \{ \lambda_1, \lambda_2 \} \}$ is equal to $\{ +1 \}$ or $\{ -1\}$).
If $\sigma=+1$, then $\{ t, u_+ \}$ is a unique facet of ${\cal N}$ that contains $t$, and  on the other hand  only $\{ t,j\}$ is such a facet if $\sigma=-1$.
However, each element of ${\cal N}$ must be contained in two or zero facets because ${\cal N}$ is a uniform oriented matroid of rank $3$.
(Indeed, consider the contraction ${\cal N}'= {\cal N}/t$.
Since ${\cal N}'$ is of rank $2$, the oriented matroid ${\cal N}'$ is realizable. Since ${\cal N}$ is uniform, the contraction ${\cal N}'$ is simple. Therefore, 
the number of facets of ${\cal N}$ that contain $t$ corresponds to the number of extreme points of ${\cal N}'$, which must be zero or two.)
This is a contradiction.
The same argument applies to Cases (A-2) and (A-3).
\\
\begin{figure}[h]
\begin{minipage}[t]{0.33\columnwidth}
\centering 
\includegraphics[scale=0.25, bb = 40 230 565 833]{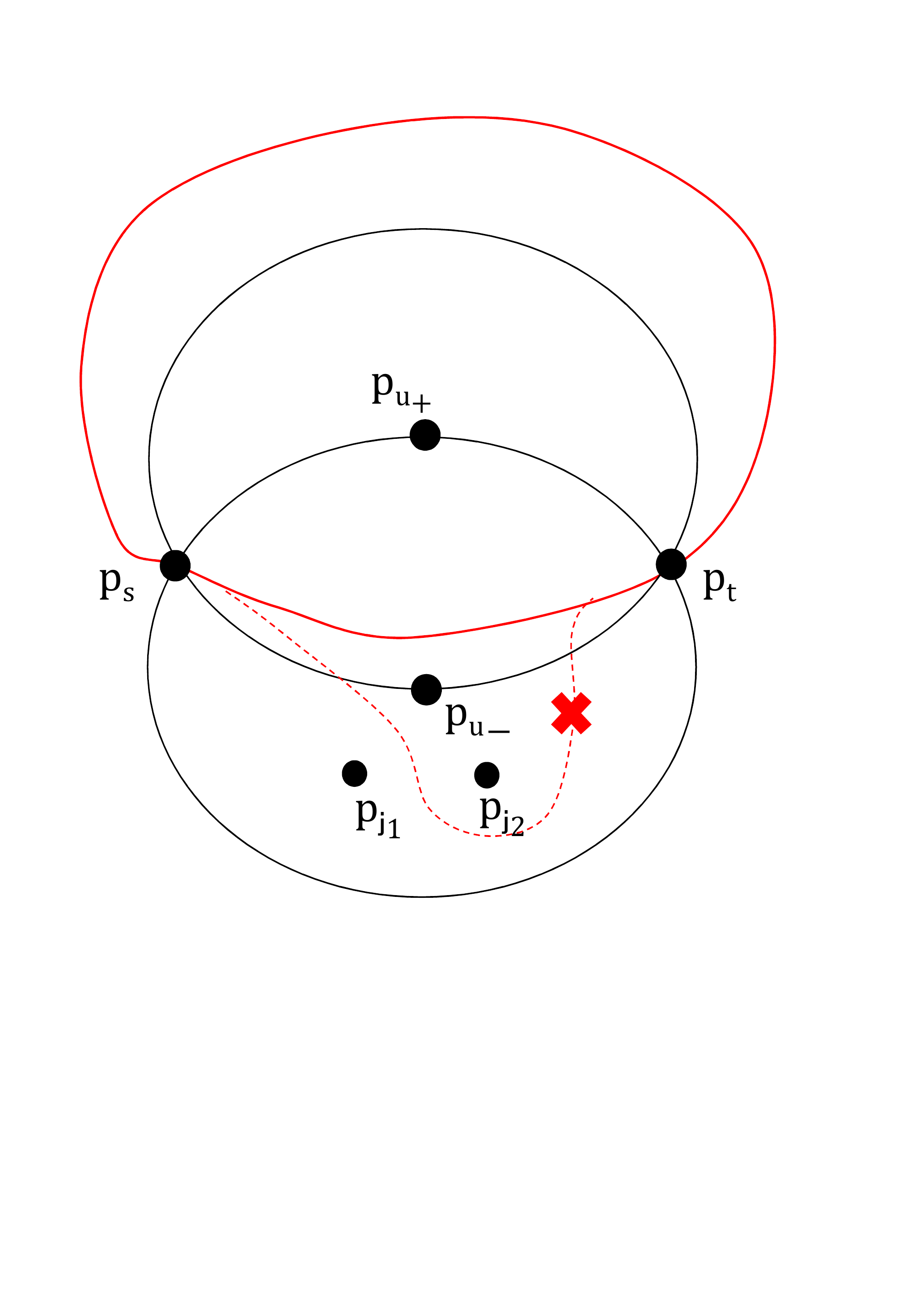} 
\caption{Case (A-1)}
\label{fig:discussion3_1}
\end{minipage}
\begin{minipage}[t]{0.3\columnwidth}
\centering 
\includegraphics[scale=0.25, bb = 40 240 563 842]{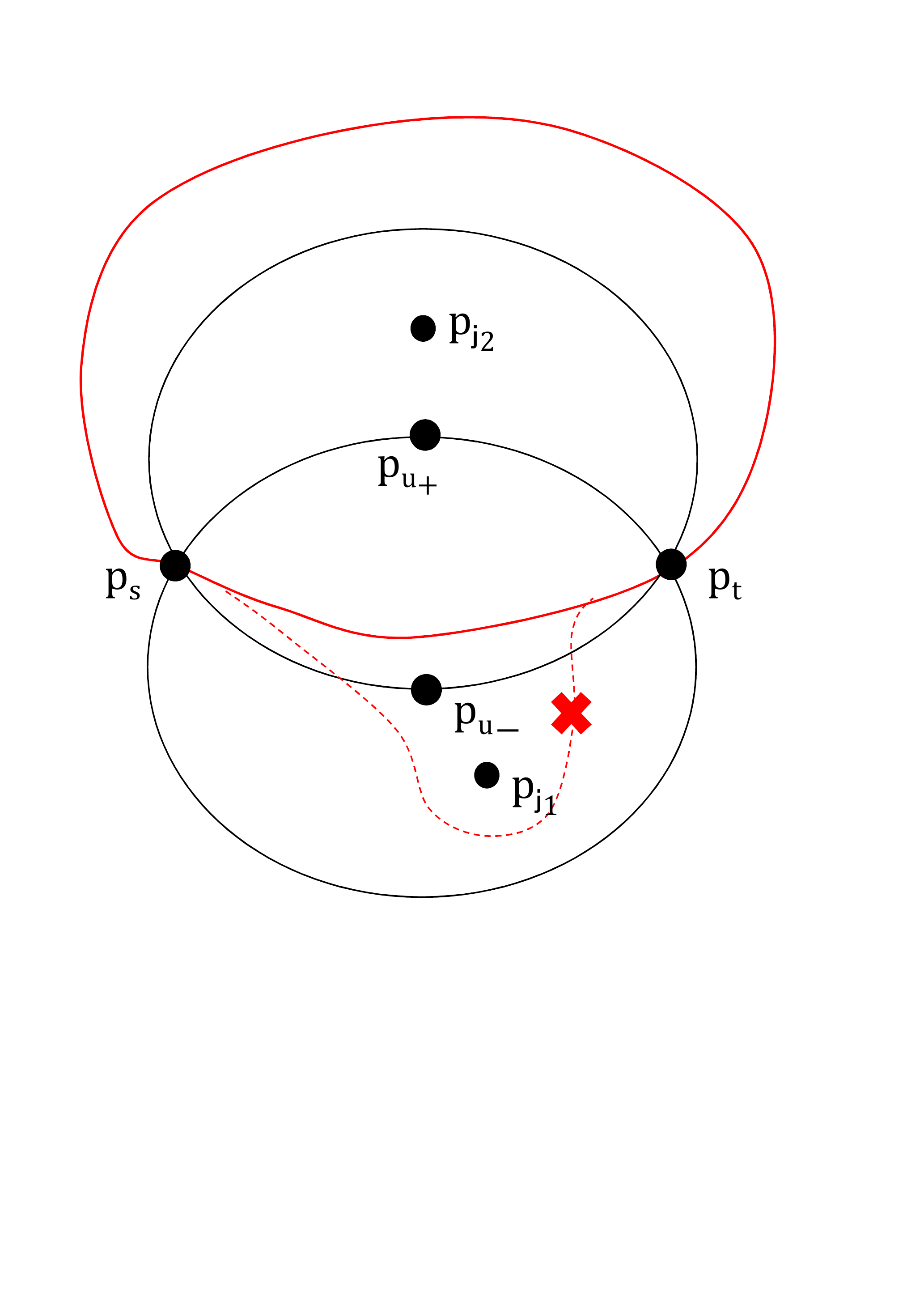}
\caption{Case (A-2)}
\label{fig:discussion3_2}
\end{minipage}
\begin{minipage}[t]{0.33\columnwidth}
\centering 
\includegraphics[scale=0.25, bb = 40 240 563 842]{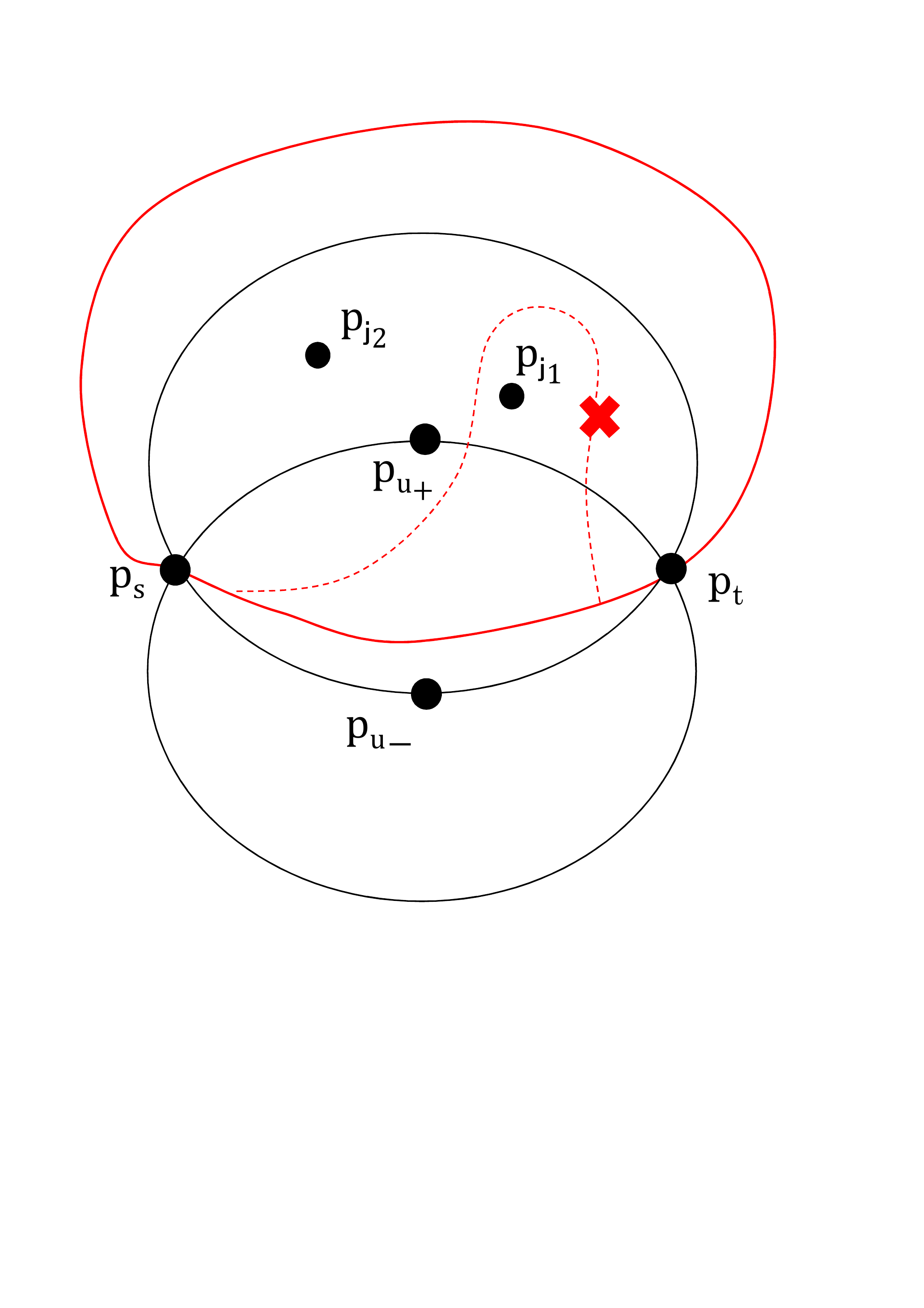}
\caption{Case (A-3)}
\label{fig:discussion3_3}
\end{minipage}
\end{figure}

\noindent
Next, we consider Cases (B1)--(B4).
Since each of Cases (B-1)--(B-4) can be transformed into each other by reversing the insides and outsides of $l_{s',t',a}$ and $l_{s',u',b}$ appropriately, it suffices to consider Case (B-1).
Let us first assume that $(s', t', u') = (i_1,i_2, i_3)$. Then, we have $a < i_3$ and $b < i_2$, which implies that curves $l_{i_1,i_2,a}, l_{i_1,i_3,b}, l_{i_1,i_2,b}$, and $l_{i_1, a,b}$ are already drawn.
\\
\\
(B-1-i) $p_a \in I_{i_1,i_3,b}$ and $p_b \in I_{i_1,i_2,a}$.

Let us first assume that $p_b \notin \wideparen{p_{i_1}p_{i_3}}$.
Let $j \in \{ j_1, j_2 \}$ be such that $\chi (i_1,i_2,i_3,j) = -\chi (i_1,i_2,i_3, b)$.
Since ${\cal M}[i_1,i_2,i_3,a,b,j]$ is a uniform matroid polytope of rank $4$, ${{\cal N} := \cal M}[i_1,i_2,i_3,a,b,j]/i_1$ is an acyclic uniform oriented matroid of rank $3$.
Without loss of generality we assume that $\chi (i_1,i_2,a,b) = +1$. Considering the positions of $p_b,p_{i_3},p_j$ and $l_{i_1,i_2,a}$, we obtain $\chi (i_1,i_2,a,i_3) = +1$ and $\chi (i_1,i_2,a,j) = -1$.  
By the information of $l_{i_1,i_2,b}$, we obtain $\chi (i_1,i_2,b,j) = -1$ and $\chi (i_1,i_2,b,i_3) = +1$.
Considering $l_{i_1,i_3,b}$, we obtain $\chi(i_1,b,i_3,a) = -1$ and $\chi (i_1,b,i_3,,j) = -1$. By the assumption of $j$, we have $\chi (i_1,i_2,i_3,j) = +1$.
Then, none of $\{ i_2, a\}, \{ i_2,b\}, \{ i_2, i_3 \}, \{ i_2, j \}$ is a facet of  ${\cal N}$.
Similarly, none of $\{ b, i_2\}, \{ b, a\}, \{ b, i_3 \}, \{ b, j \}$ is a facet of  ${\cal N}$,
and the same holds for  $\{ i_3, b\}, \{ i_3, i_2 \}$, and $\{ i_3, j \}$.
Since ${\cal N}$ is uniform, there must be two or zero facets of ${\cal N}$ that contains $i_3$.
Therefore, $\{ i_3, a \}$ cannot be a facet of ${\cal N}$ either.
Hence, $i_2$, $i_3$ and $b$ are not extreme points of ${\cal N}$, which contradicts to the fact that an acyclic oriented matroid of rank $r$ has at least $r$ extreme points (see \cite[Theorem 1.3]{LV80}).

Next, we consider the case $b \in \wideparen{p_{i_1}p_{i_3}}$.
Let $j \in \{ j_1, j_2 \}$ be such that $\chi (i_1,i_2,i_3,j) = \chi (i_1,i_2,i_3, b)$ and ${{\cal N} := \cal M}[i_1,i_2,i_3,a,b,j]/i_1$.
In this case, an easy calculation shows that $\{ i_2,b\}$ is a unique facet of ${\cal N}$ that contains $i_2$.
This is a contradiction.
\\
\\
(B-1-ii) $p_a \in O_{i_1,i_3,b}$ and $p_b \in I_{i_1,i_2,a}$.

We consider two cases: (B-1-ii-a) $p_b \in \wideparen{p_{i_1}p_{i_3}}$ and (B-1-ii-b) $p_b \notin \wideparen{p_{i_1}p_{i_3}}$.
In Case (B-1-ii-a), we take $j \in \{ j_1, j_2\}$ such that $\chi (i_1,i_2,i_3,j ) = \chi (i_1,i_2,i_3,b)$. In Case (B-1-ii-b), $j \in \{ j_1, j_2\}$ such that $\chi (i_1,i_2,i_3,j ) = -\chi (i_1,i_2,i_3,b)$.
We then apply a similar discussion.
\\
\\
(B-1-iii) $p_a \in I_{i_1,i_3,b}$ and $p_b \in O_{i_1,i_2,a}$.

Take $j \in \{ j_1, j_2\}$ such that $\chi (i_1,i_2,i_3,j ) = -\chi (i_1,i_2,i_3,b)$ and apply a discussion similar to that of Case~(B-1-i).
\\
\\
(B-1-iv) $p_a \in O_{i_1,i_3,b}$ and $p_b \in O_{i_1,i_2,a}$.

Take $j \in \{ j_1, j_2\}$ such that $\chi (i_1,i_2,i_3,j ) = -\chi (i_1,i_2,i_3,b)$ and apply a discussion similar to Case~(B-1-i).
\\
\begin{figure}[h]
\begin{minipage}[t]{0.5\columnwidth}
\centering 
\includegraphics[scale=0.25, bb = 36 270 556 763, clip]{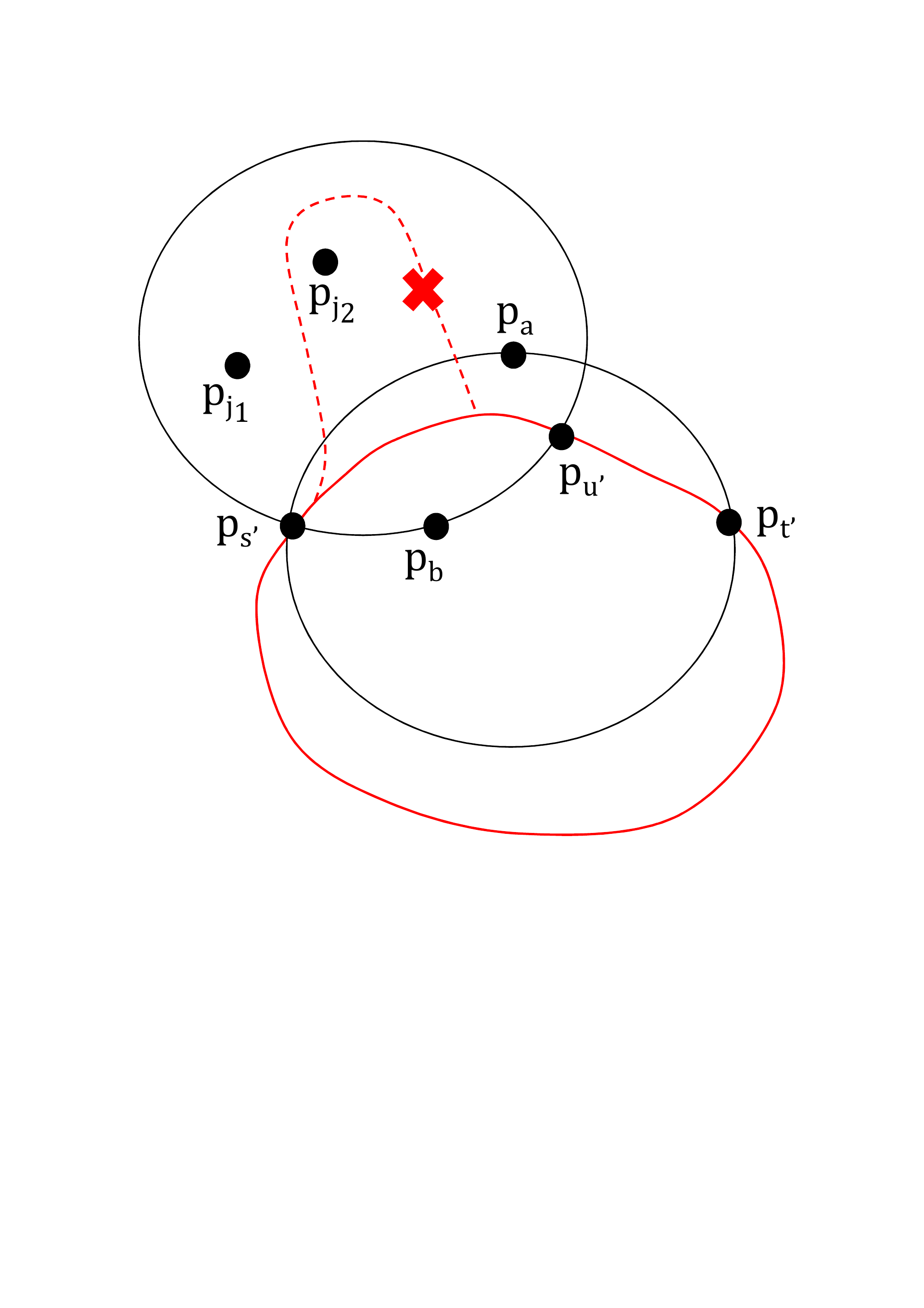}
\caption{Case (B-1)}
\label{fig:discussion3_5}
\end{minipage}
\begin{minipage}[t]{0.5\columnwidth}
\centering 
\includegraphics[scale=0.25, bb = 20 286 540 800, clip]{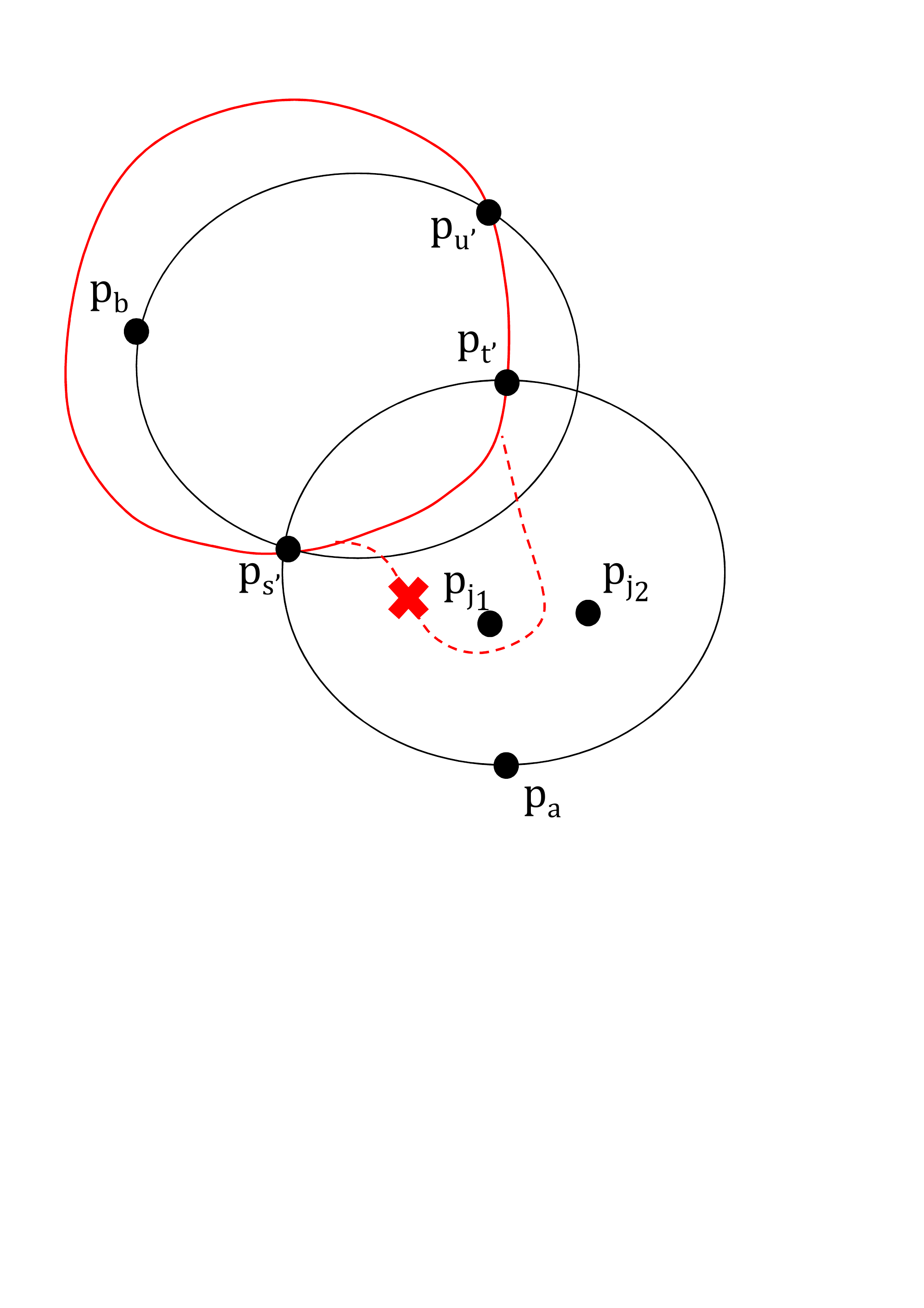}
\caption{Case (B-2)}
\label{fig:discussion3_6}
\end{minipage}
\begin{minipage}[t]{0.5\columnwidth}
\centering 
\includegraphics[scale=0.25, bb = 40 380 540 830, clip]{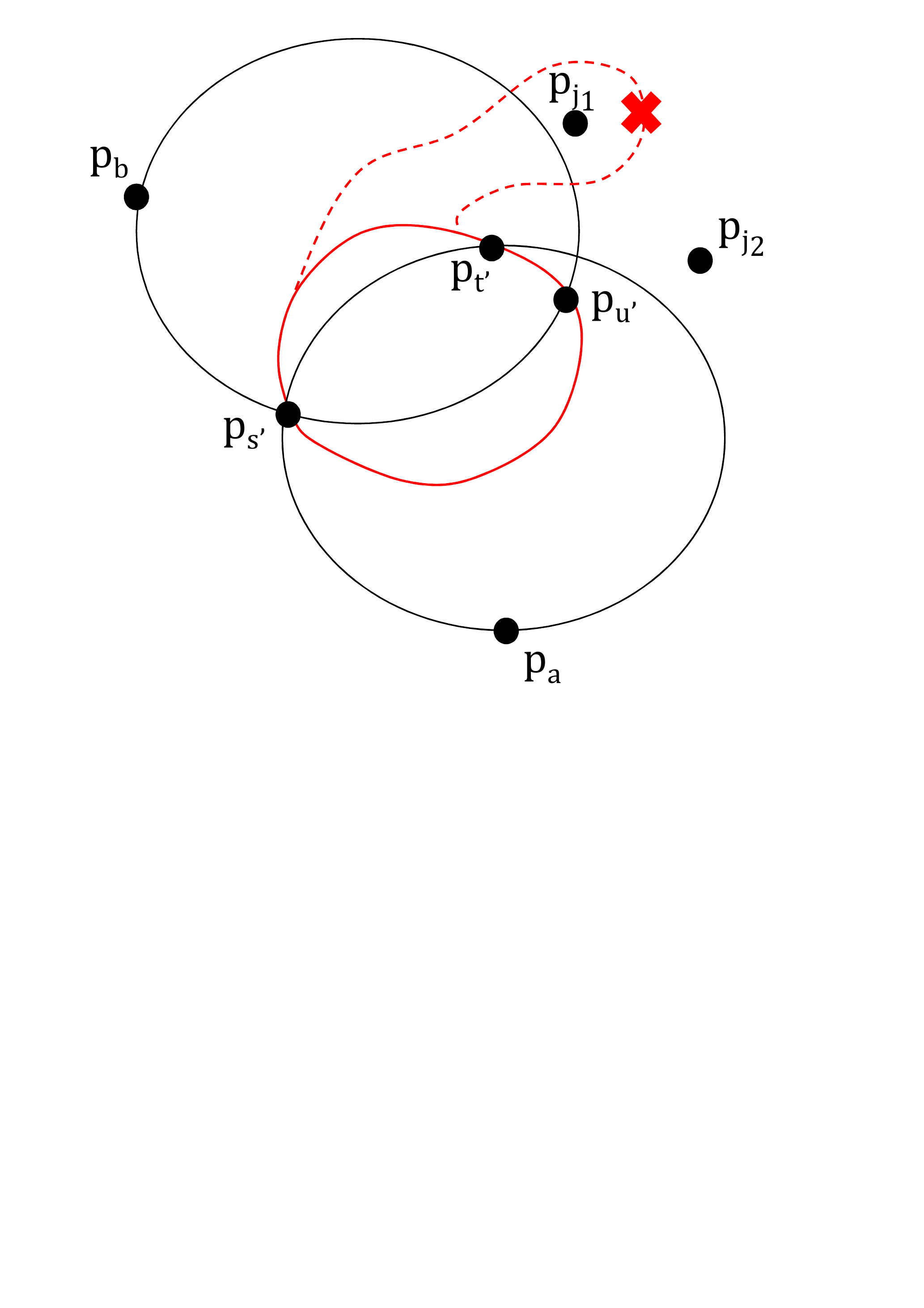}
\caption{Case (B-3)}
\label{fig:discussion3_7}
\end{minipage}
\begin{minipage}[t]{0.5\columnwidth}
\centering 
\includegraphics[scale=0.25, bb = 40 300 540 800, clip]{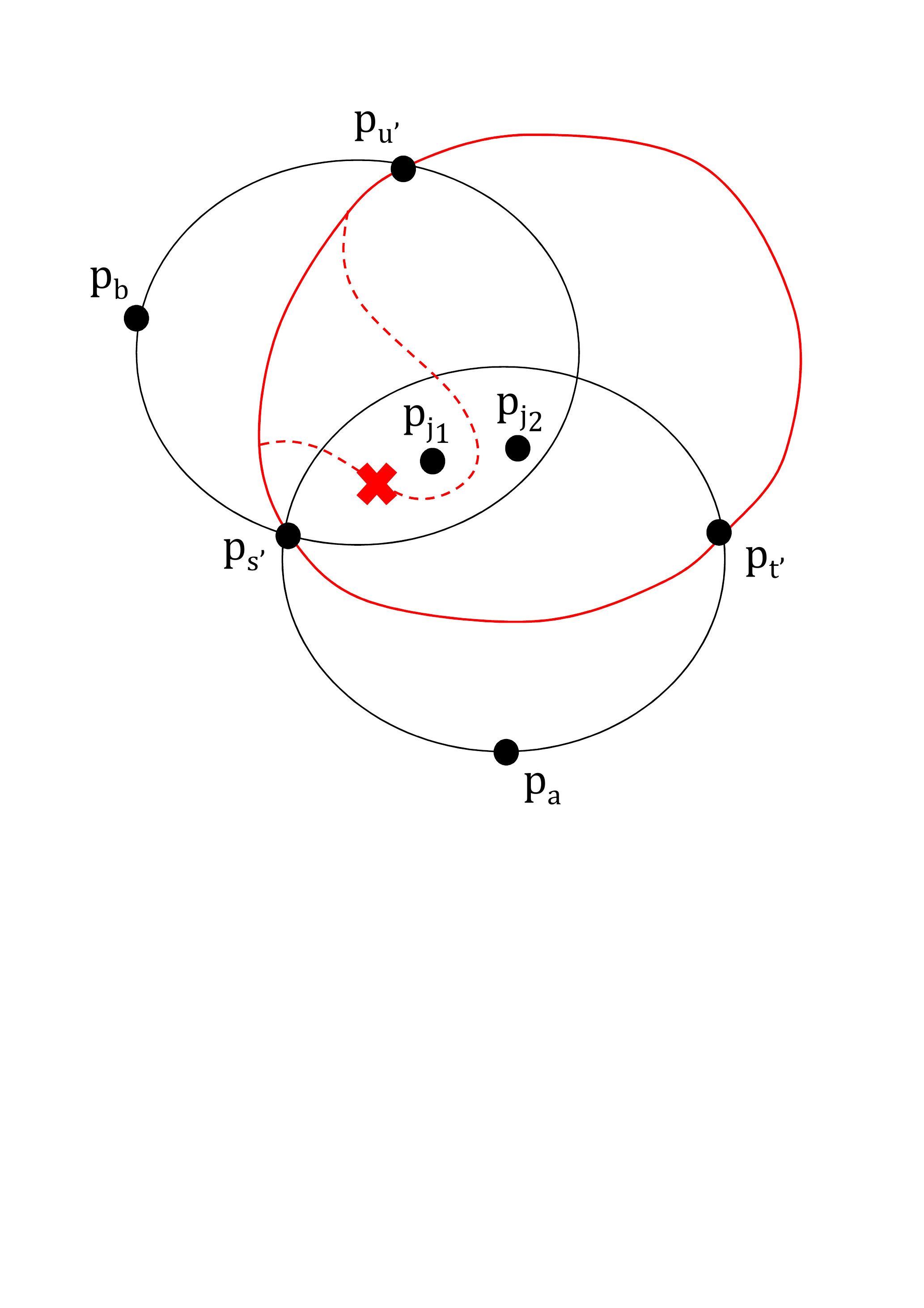}
\caption{Case (B-4)}
\label{fig:discussion3_8}
\end{minipage}
\end{figure}

Therefore, Case (B-1) is impossible if $(s,t,u) = (i_1,i_2,i_3)$. 
The other cases can be treated similarly.
If  $(s,t,u) = (i_2,i_1,i_3)$, we have $a < i_3$ and $b < i_1$, which implies that curves $l_{i_1,i_2,a}, l_{b,i_1,i_2}$ and $l_{b,i_2,i_3}$ are already drawn.
The contraction ${\cal M}/i_2$ must be an acyclic oriented matroid of rank $3$, but we obtain a contradiction similarly to the case $(s,t,u) = (i_1,i_2,i_3)$.
If  $(s,t,u) = (i_3,i_1,i_2)$, we have $a < i_2$ and $b < i_1$, which implies that curves $l_{b,i_1,i_3}, l_{b,a,i_3}, l_{b,i_2,i_3}$, and $l_{i_1,a,i_3,}$ are already drawn.
The contraction ${\cal M}/i_3$ must be an acyclic oriented matroid of rank $3$, but we obtain a contradiction similar to that of the case $(s,t,u) = (i_1,i_2,i_3)$.
\\
\\
Based on the claim, we construct arcs $\gamma_{i_1,i_2}$, $\gamma_{i_2,i_3}$ and $\gamma_{i_3,i_1}$, and
define $l_{i_1,i_2,i_3}$ as their concatenation. Then, the  curve $l_{i_1,i_2,i_3}$ satisfies the desired condition.
\\
\\
{\bf Claim 2.}
The curve $l_{i_1,i_2,i_3}$ is not self-intersecting and it intersects with each of the other drawn curves $l_{i_1,i_2,a}, l_{i_1,i_3,b}, l_{i_2,i_3,c}$ ($a < i_3, b < i_2, c < i_1$) exactly twice.
\\
\\
\noindent
{\bf Proof.}
We first show that $l_{i_1,i_2,i_3}$ intersects with each of the curves $l_{i_1,i_2,a}, l_{i_1,i_3,b}, l_{i_2,i_3,c}$ ($a < i_3, b < i_2, c < i_1$) exactly twice.
It suffices to see that $l_{i_1,i_2,i_3}$ intersects with those curves only transversally at $p_{i_1}, p_{i_2}, p_{i_3}$.
Let us first consider the intersections of $l_{i_1,i_2,i_3}$ and $l_{i_1,i_2,a}$.
If $p_{i_3} \in I_{i_1,i_2,a}$, we have $\mathring{\gamma}_{i_1,i_3} \subset I_{i_1,i_2,a}$ and $\mathring{\gamma}_{i_2,i_3} \subset I_{i_1,i_2,a}$
because neither $\mathring{\gamma}_{i_1,i_3}$ nor $\mathring{\gamma}_{i_2,i_3}$ has an intersection with $l_{i_1,i_2,a}$.
By definition of $\gamma_{i_1,i_2}$,  we have $\mathring{\gamma}_{i_1,i_2} \subset O_{i_1,i_2,a}$.
This means that the curves $l_{i_1,i_2,i_3}$ and $l_{i_1,i_2,a}$ intersect transversally at $p_{i_1}$ and $p_{i_2}$.
The same conclusion is obtained in the case $p_{i_3} \in O_{i_1,i_2,a}$.
The same holds for curves $l_{i_1,i_3,b}$ and $l_{i_2,i_3,c}$.
We can also conclude that $l_{i_1,i_2,i_3}$ is not self-intersecting by applying a similar discussion.
\noindent
\\
\\
Therefore, we can construct $l_{i_1,i_2,i_3}$ with the desired property.
Repeating this procedure, we can realize ${\cal M}$ by a $2$-weak PPC configuration. \QED

\section{Las Vergnas conjecture on simplicial topes}
In \cite{LV80}, Las Vergnas posed the following conjecture:
\begin{conj}
\label{conj:simplicial}
Every simple oriented matroid ${\cal M}$ of rank $r$ has an acyclic reorientation $_{-F}{\cal M}$ that has exactly $r$ extreme points (or equivalently $r$ facets).
\end{conj}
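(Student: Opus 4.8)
\quad The statement is equivalent to the assertion that every simple oriented matroid ${\cal M}$ of rank $r$ has a \emph{simplicial tope}: if $T$ is a tope whose closed cell in the Folkman--Lawrence arrangement of pseudospheres on $S^{r-1}$ is a simplex, then the acyclic reorientation $_{-T^-}{\cal M}$ has its positive tope bounded by exactly $r$ of the pseudospheres, hence has exactly $r$ facets and, dually, exactly $r$ extreme points; conversely, an acyclic reorientation with exactly $r$ extreme points (its minimum possible number, by \cite[Theorem 1.3]{LV80}) forces the positive tope to be a simplex and so arises this way. Thus the plan is to produce a simplicial cell in an arbitrary pseudosphere arrangement.

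First I would dispose of the base range $r\le 3$, which is classical: every simple arrangement of pseudolines in $\mathbb{P}^2$ bounds a triangular region (\cite{G72}), and reorienting so that such a triangle becomes the positive tope settles rank $3$, with rank $\le 2$ being trivial. All the difficulty lies in $r\ge 4$, where no analogue of ``every arrangement has a triangle'' is available.

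For $r\ge 4$ the natural line of attack is induction on the rank combined with a sweep. Choosing an element $e$, the contraction ${\cal M}/e$ has rank $r-1$ and, by induction, a simplicial tope, which corresponds to a simplicial face of the pseudosphere $S_e$; one then tries to thicken this face to a simplicial cell of ${\cal M}$ lying on one side of $S_e$. This works when ${\cal M}$ admits a single-element extension in general position that is Euclidean, since a coherent linear sweep of the associated affine arrangement has a simplicial boundary cell (the Edmonds--Mandel argument, \cite{EM82}), and it also disposes of every realizable ${\cal M}$ through Shannon's theorem on simplicial regions of hyperplane arrangements (\cite{S79}). I expect this to be exactly where a proof breaks in full generality: not every oriented matroid admits a Euclidean general-position extension --- non-Euclidean oriented matroids exist --- so one needs a purely combinatorial substitute for ``the extreme cell of a sweep is a simplex,'' and no such substitute is presently known. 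This is why the conjecture is still open.

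Accordingly, the plan of this paper is to carry out precisely this program in the first range of genuinely new cases for which a workable model is available, namely uniform matroid polytopes of rank $4$. By Theorem~\ref{main_thm}, such an ${\cal M}$ is encoded by a $2$-weak PPC configuration ${\cal P}=((p_e)_{e\in E},L)$ on $S^2$, and the condition ``${\cal M}$ has a simplicial tope'' translates into a planar statement about ${\cal P}$: the existence of a cell of the arrangement of the $\binom{n}{3}$ pseudocircles that is cut off by exactly four of them and lies on the prescribed side of each. One then proves the required ``triangle lemma'' for $2$-weak PPC configurations by a minimality argument on candidate cells, in the spirit of the planar triangle lemma but one dimension higher. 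Within this restricted program the main obstacle I anticipate is controlling the weak intersection axiom (P3$^2$): two pseudocircles sharing at most one point of $P$ may cross more than twice, so one must rule out that such ``irrelevant'' pseudocircles subdivide the candidate minimal cell --- and this is exactly where the matroid-polytope hypothesis (every point extreme) and the rank-$4$ restriction are used; removing those hypotheses would amount to resolving the conjecture in general.
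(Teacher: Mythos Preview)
The statement you were handed is a \emph{conjecture}; the paper does not prove it, and your proposal correctly records that it remains open beyond rank~$3$, the realizable case, and oriented matroids with a Euclidean general-position extension. What the paper actually proves is the special case of uniform matroid polytopes of rank~$4$ (its Theorem~5.2), and your last paragraph sketches a plan for that case. So the only meaningful comparison is between that sketch and the paper's argument.

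There the two diverge in the object being minimized. You propose to look for a simplicial cell in the full arrangement of all $\binom{n}{3}$ pseudocircles and anticipate (P3$^2$) as the obstacle: pseudocircles sharing fewer than two points of $P$ may cross more than twice and subdivide a candidate minimal cell. The paper never works with cells of the full arrangement and so never meets this obstacle. Instead it fixes a $4$-tuple $e_1,e_2,e_3,e_4\in E$ and considers only the four pseudocircles $l_{e_i,e_j,e_k}$ through its triples; these share two points of $P$ pairwise, so (P3$^2$) applies to every pair involved. It then sets
\[
R_{e_1,e_2,e_3,e_4}=(I_{e_1e_2e_3}\cap O_{e_1e_2e_4})\cup(O_{e_1e_2e_3}\cap I_{e_1e_2e_4})\cup(I_{e_1e_3e_4}\cap O_{e_2e_3e_4})\cup(O_{e_1e_3e_4}\cap I_{e_2e_3e_4})
\]
and runs the minimality argument on $|P\cap R|$: if some $p_{e'}$ lies in one of the four lunes of $R$, replacing the corresponding $e_i$ by $e'$ gives $R_{\dots,e',\dots}\subsetneq R_{e_1,e_2,e_3,e_4}$, hence a strict drop. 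When $P\cap R=\emptyset$, reorienting by $F=\{e\mid p_e\in O_{e_1^*e_2^*e_3^*}\cap O_{e_1^*e_2^*e_4^*}\}\cup\{e_1^*,e_2^*\}$ makes each of the four curves a separator, so $_{-F}{\cal M}$ has exactly the four facets $\{e_i^*,e_j^*,e_k^*\}$.

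In short, your high-level instinct ``minimality in the spirit of the planar triangle lemma'' is right, but the concrete plan---search cells of the whole arrangement and fight (P3$^2$)---is not how the paper proceeds; the paper's choice of $R$ confines the argument to four pseudocircles in the controlled regime, so irrelevant pseudocircles simply do not enter, and the matroid-polytope hypothesis is used only upstream, to obtain the PPC representation via Theorem~\ref{main_thm}.
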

In terms of pseudosphere arrangements, this conjecture claims that every pseudosphere arrangement contains a simplicial cell.
This conjecture has been verified for oriented matroids of rank $3$~\cite{G72}, realizable oriented matroids~\cite{S79} and oriented matroids admitting extensions in general position that are Euclidean~\cite{EM82}, but the original conjecture remains open.
Here, we prove this conjecture for the case of uniform matroid polytopes of rank $4$.

In the following proof, we consider reorientations of uniform matroid polytopes of rank $4$.
Those oriented matroids can be represented as \emph{signed 2-weak PPC configurations}.
Let ${\cal M}=(E,{\cal C}^*)$ be a uniform matroid polytope of rank $4$ and $F$ be a subset of $E$.
To represent the reorientation $_{-F}{\cal M}$, we first take a 2-weak PPC configuration $((p_e)_{e \in E},L)$ that represents ${\cal M}$.
Then, we consider points in $(p_e)_{e \in E \setminus F}$ to be \emph{positive points} and points in $(p_e)_{e \in F}$ to be \emph{negative points}.
For  each curve $l_{\lambda} \in L$, we consider a sign vector $X^F_{\lambda} \in \{ +1,-1,0\}^E$ defined by
\begin{align*}
X^F_{\lambda}(e) = 
\begin{cases}
+1 & \text{ if  $p_e \in O_{\lambda}$ and $e \in E \setminus F$ or if $p_e \in I_{\lambda}$ and $e \in F$,} \\
-1 & \text{ if  $p_e \in I_{\lambda}$ and $e \in E \setminus F$ or if $p_e \in O_{\lambda}$ and $e \in F$,} \\
0 & \text{ if $p_e \in l_{\lambda}$} 
\end{cases}
\end{align*}
(i.e., $X_{\lambda}^F$ is the sign vector obtained by reversing the sign $X_{\lambda}(e)$ of $X_{\lambda}$ for each negative point $p_e$) and we let $({\cal C}^F)^* := \{ \pm X^F_{\lambda} \mid \lambda \in \Lambda (E,3) \}$.
Then, the oriented matroid $(E, ({\cal C}^F)^*)$ coincides with the reorientation $_{-F}{\cal M}$.
In this setting, a set $\{ e_1,e_2,e_3 \} \subset E$ is a facet of $_{-F}{\cal M}$ if and only if the curve $l_{e_1,e_2,e_3}$ separates the positive points and the negative points.

\begin{thm}
Las Vergnas conjecture is true for uniform matroid polytopes of rank $4$.
\end{thm}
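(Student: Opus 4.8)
\medskip
\noindent\textbf{Proof plan.}
The plan is to use the two-dimensional representation of Theorem~\ref{main_thm} to translate the conjecture into a statement about colourings of a finite point set on $S^2$, and then to argue by induction on $|E|$, invoking the rank-$3$ case of the conjecture (Gr\"unbaum's triangle theorem~\cite{G72}) only implicitly through the base behaviour of small matroid polytopes. First I would fix, via Theorem~\ref{main_thm}, a $2$-weak PPC configuration $\mathcal{P}=((p_e)_{e\in E},L)$ in general position with $\mathcal{C}^{*}=\mathcal{C}^{*}(\mathcal{P})$; by the description of signed $2$-weak PPC configurations given above, proving Conjecture~\ref{conj:simplicial} for $\mathcal{M}$ is the same as exhibiting $F\subseteq E$ for which the reorientation $_{-F}\mathcal{M}$ is acyclic and exactly four of the curves $l_{\lambda}\in L$ separate the positive points $(p_e)_{e\in E\setminus F}$ from the negative points $(p_e)_{e\in F}$; such an $F$ then makes $_{-F}\mathcal{M}$ a simplex with the remaining points in its interior. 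Along the way acyclicity will be cheap: since a matroid polytope is acyclic, the all-positive vector $\mathbf 1$ lies in $\mathcal{V}^{*}_{\mathcal{M}}$, and $Y\circ\mathbf 1\in\mathcal{V}^{*}_{\mathcal{M}}$ for any covector $Y$, so every sign vector obtained from a covector of $\mathcal{M}$ by turning its zero entries into $+1$ is again a covector (hence a tope if it has no zero entry); in particular, whenever a reorientation of $\mathcal{M}\setminus v$ is acyclic, at least one of the two ways of colouring the point $p_v$ keeps the reorientation of $\mathcal{M}$ acyclic.

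For the induction, the base case $|E|=4$ is immediate: the only uniform matroid polytope of rank $4$ on four elements is the simplex, and $F=\emptyset$ works. For $|E|=n\ge 5$ I would pick any $v\in E$. Since $\mathcal{M}$ is a matroid polytope, $\mathcal{M}\setminus v$ is a uniform matroid polytope of rank $4$ on $n-1$ elements, and the configuration $\mathcal{P}\setminus v$ obtained by deleting $p_v$ together with the curves through it is a $2$-weak PPC configuration representing it. By the inductive hypothesis there is $F'\subseteq E\setminus v$ such that $_{-F'}(\mathcal{M}\setminus v)$ is a simplex: exactly four curves $l_{a,b,c},\,l_{a,b,d},\,l_{a,c,d},\,l_{b,c,d}$ separate the (restricted) positive and negative points, and the remaining points of $(p_e)_{e\in E\setminus v}$ all lie in the region bounded by these four curves. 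Now put $p_v$ back into the picture and colour it so that the resulting reorientation of $\mathcal{M}$ is acyclic (possible by the first paragraph). I would then run a case analysis on the position of $p_v$ relative to the four curves $l_{a,b,c},\,l_{a,b,d},\,l_{a,c,d},\,l_{b,c,d}$, of the same flavour as the ten-region analysis in the proof of Proposition~\ref{prop:OM}: (i) if $p_v$ lies in the interior region, these four curves remain the only separators and $_{-F}\mathcal{M}$ is still a simplex (with $v$ now an interior point); (ii) if $p_v$ lies ``beyond'' the three curves passing through a single one of $a,b,c,d$, that vertex is swallowed and again exactly four curves separate; (iii) in the remaining situation $p_v$ lies beyond exactly two of the four curves, the augmented configuration has six separating curves, and it is combinatorially a triangular bipyramid (two apex elements, each lying on three separating curves, and three equatorial elements, each on four).

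The main obstacle is case (iii). Here I would resolve it by one further reorientation: a triangular bipyramid has two apex vertices, and reorienting one of them, say $w$, makes $p_w$ either lie inside the simplex formed by the other four vertices or lie beyond the three facets through one of them; in either case the reorientation becomes a simplex again (with some element interior). The verification rests on the elementary fact -- which I expect to prove separately in the language of covectors -- that reorienting a single vertex of a (reoriented) matroid polytope keeps it a vertex and turns precisely the vertices non-adjacent to it into interior points, so that flipping an apex of the bipyramid collapses it onto the tetrahedron spanned by that apex and the three equatorial vertices; acyclicity at each step is again handled by the composition trick with $\mathbf 1$. Granting the case analysis (i)--(iii) and this last step -- which is exactly where the two-dimensional representation does the concrete work, through explicit region bookkeeping around $p_v$ -- the induction closes and the conjecture follows for all uniform matroid polytopes of rank $4$. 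I expect the subtle points to be verifying that the six-separator case is always of the bipyramidal type claimed, and that the apex-flip always lands on a simplex (so that one never has to iterate indefinitely); a careful choice of $v$, or of the inductive simplicial tope of $\mathcal{M}\setminus v$, may be needed to make these go through smoothly.
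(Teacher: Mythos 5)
Your approach is genuinely different from the paper's. The paper does not use induction on $|E|$: instead it associates to any four points $e_1,e_2,e_3,e_4$ the region $R_{e_1,e_2,e_3,e_4}\subset S^2$ (a union of four lenses), observes that $P\cap R_{e_1,e_2,e_3,e_4}=\emptyset$ directly produces a reorientation with the four facets $\{e_i,e_j,e_k\}$, and that any point found in $R_{e_1,e_2,e_3,e_4}$ can be swapped in for one of the $e_i$ so that $|P\cap R|$ strictly decreases. This greedy descent terminates and gives the simplicial tope with no case analysis and no appeal to the rank-$3$ case. Your deletion-plus-induction scheme is a reasonable alternative idea, but as written it has a genuine gap.

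The gap is in your case (iii). First, a small point: on $S^2$, four pseudocircles each through three of $p_a,p_b,p_c,p_d$ and pairwise meeting only at those points cut the sphere into exactly ten regions (by Euler's formula: $4$ vertices, $12$ arcs, hence $10$ faces), and these are: the interior, the four ``beyond a facet'' regions, the four ``beyond a vertex'' regions, and one antipodal region. There is no ``beyond exactly two curves'' region, so the residual problematic case is $p_v$ beyond a single facet, which is indeed the bipyramid. The serious problem is the resolution you propose. You rely on the statement that reorienting a vertex $w$ keeps $w$ a vertex and ``turns precisely the vertices non-adjacent to it into interior points.'' That statement is silent about non-vertex elements, and in fact after flipping an apex of the bipyramid, none of the three facet curves through the opposite apex $d$ can separate any longer (the flipped apex, now negative, still lies on the positive side of each), and whether the equator curve $l_{a,b,c}$ becomes a separating curve depends on where \emph{all} of the other (interior) elements and the previously negative elements sit relative to $l_{a,b,c}$ --- this is not forced by the bipyramidal structure of the five chosen points alone. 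So it is not established that the apex-flip lands on a simplex, and your own caveat (``a careful choice of $v$, or of the inductive simplicial tope of $\mathcal{M}\setminus v$, may be needed'') is precisely the unresolved difficulty. Separately, case (ii) also quietly uses that every point interior to the simplex $\{a,b,c,d\}$ stays interior to the new simplex $\{a,b,c,v\}$, which requires an argument in the pseudocircle setting (it is not a purely local statement about the four curves). The paper sidesteps all of this by tracking the single monovariant $|P\cap R_{e_1,e_2,e_3,e_4}|$.
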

\begin{proof}
Let ${\cal M}$ be a uniform matroid polytope of rank $4$. 
Take a  $2$-weak PPC configuration  ${\cal P} = ((p_e)_{e \in E} (=: P), L)$ that realizes ${\cal M}$.
By specifying negative points of ${\cal P}$, we consider reorientations of ${\cal M}$.
For each $e_1,e_2,e_3,e_4 \in E$, we denote $R_{e_1,e_2,e_3,e_4} := (I_{e_1,e_2,e_3} \cap O_{e_1,e_2,e_4}) \cup (O_{e_1,e_2,e_3} \cap I_{e_1,e_2,e_4}) \cup (I_{e_1,e_3,e_4} \cap O_{e_2,e_3,e_4}) \cup (O_{e_1,e_3,e_4} \cap I_{e_2,e_3,e_4})$. 
It suffices to find $p_{e_1^*}, p_{e_2^*}, p_{e_3^*}, p_{e_4^*} \in P$ (and $q \in S^2$) such that $P \cap R_{e_1^*,e_2^*,e_3^*,e_4^*}$ is empty.
Indeed, if there are such $p_{e_1^*}, p_{e_2^*}, p_{e_3^*}, p_{e_4^*} \in P$, we have $p \in (O_{e^*_1,e^*_2,e^*_3} \cap O_{e^*_1,e^*_2,e^*_4}) \cup (I_{e^*_1,e^*_3,e^*_4} \cap I_{e^*_2,e^*_3,e^*_4})$ for all $p \in P \setminus \{ p_{e^*_1}, p_{e^*_2}, p_{e^*_3}, p_{e^*_4} \}$. 
If we let $F := \{ e \in E \mid p_e \in O_{e^*_1,e^*_2,e^*_3} \cap O_{e^*_1,e^*_2,e^*_4} \} \cup \{ e^*_1,e^*_2\}$ and consider the points in $(p_f)_{f \in F}$ to be negative points,
then each of curves $l_{ e^*_1,e^*_2,e^*_3}, l_{ e^*_1,e^*_2,e^*_4}, l_{ e^*_1,e^*_3,e^*_4}, l_{ e^*_2,e^*_3,e^*_4}$ separates the positive points and the negative points.
Thus, the reorientation $_{-F}{\cal M}$ has facets $\{ e^*_1,e^*_2,e^*_3\}$, $\{ e^*_1,e^*_2,e^*_4\}$, $\{ e^*_1,e^*_3,e^*_4\}$, and $\{ e^*_2,e^*_3,e^*_4\}$.
Because an oriented matroid of rank $4$ with such facets cannot have any other facets (see \cite[Lemma 1.4.2]{LV80}),  the oriented matroid $_{-F}{\cal M}$ has exactly four facets.

Points $p_{e_1^*}, p_{e_2^*}, p_{e_3^*}, p_{e_4^*} \in P$  (and $q \in S^2$) are determined in the following manner.
First, we pick up $p_{e_1}, p_{e_2}, p_{e_3}, p_{e_4} \in P$ arbitrarily and
take $q \in S^2$ so that $p_{e_3} \in I_{e_1,e_2,e_4}$ and $p_{e_4} \in I_{e_1,e_2,e_3}$ hold.
If $P \cap R_{e_1,e_2,e_3,e_4} = \emptyset$, we are done.
Suppose that there exists $p_{e_1'} \in P$ such that $p_{e_1'} \in O_{e_2,e_3,e_4} \cap I_{e_1,e_3,e_4}$.
Then, we have  $R_{e'_1, e_2, e_3, e_4} \subset R_{e_1,e_2,e_3,e_4}$.
Since $p_{e'_1} \notin R_{e'_1,e_2,e_3,e_4}$, we have $|P \cap R_{e'_1,e_2,e_3,e_4}| < |P \cap R_{e_1,e_2,e_3,e_4}|$.
Since $p_{e_3} \in I_{e'_1,e_2,e_4}$ and  $p_{e_4} \in I_{e'_1,e_2,e_3}$  hold, we can continue this procedure.
The same discussion can be applied to the case in which there exists $p_{e_2'} \in P$ such that $p_{e_2'} \in I_{e_1,e_3,e_4} \cap O_{e_2,e_3,e_4}$ or
$p_{e_3'} \in P$ such that $p_{e_3'} \in O_{e_1,e_2,e_3} \cap I_{e_1,e_2,e_4}$ or
$p_{e_4'} \in P$ such that $p_{e_4'} \in I_{e_1,e_2,e_3} \cap O_{e_1,e_2,e_4}$.
By repeating this procedure, we obtain $p_{e_1^*}, p_{e_2^*}, p_{e_3^*}, p_{e_4^*}$ such that $R_{e_1^*,e_2^*,e_3^*,e_4^*}$ is empty with respect to $P$.
\begin{figure}[h]
\centering 
\includegraphics[scale=0.25, bb = 30 200 600 700, clip]{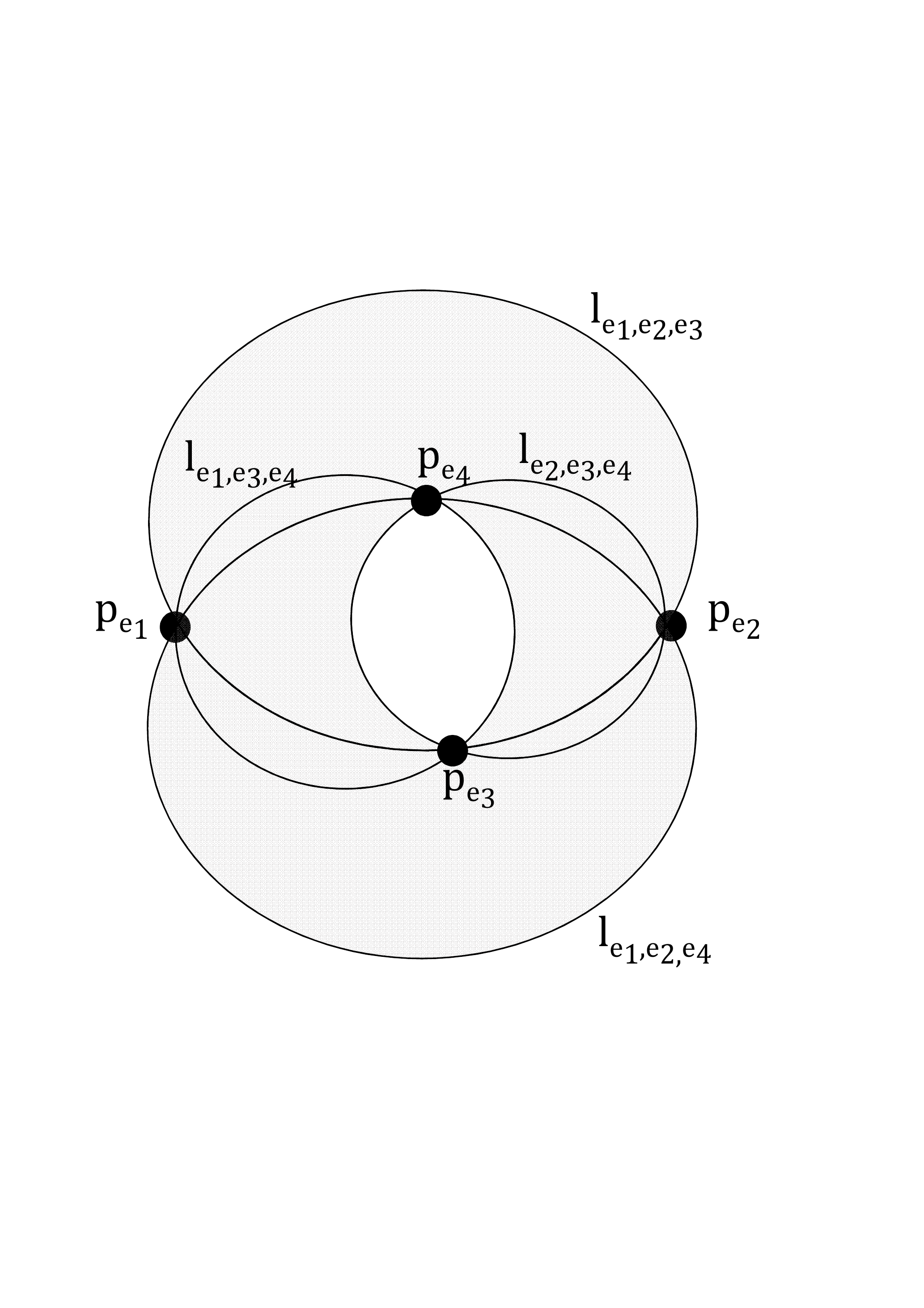} 
\includegraphics[scale=0.25, bb = 30 200 600 700, clip]{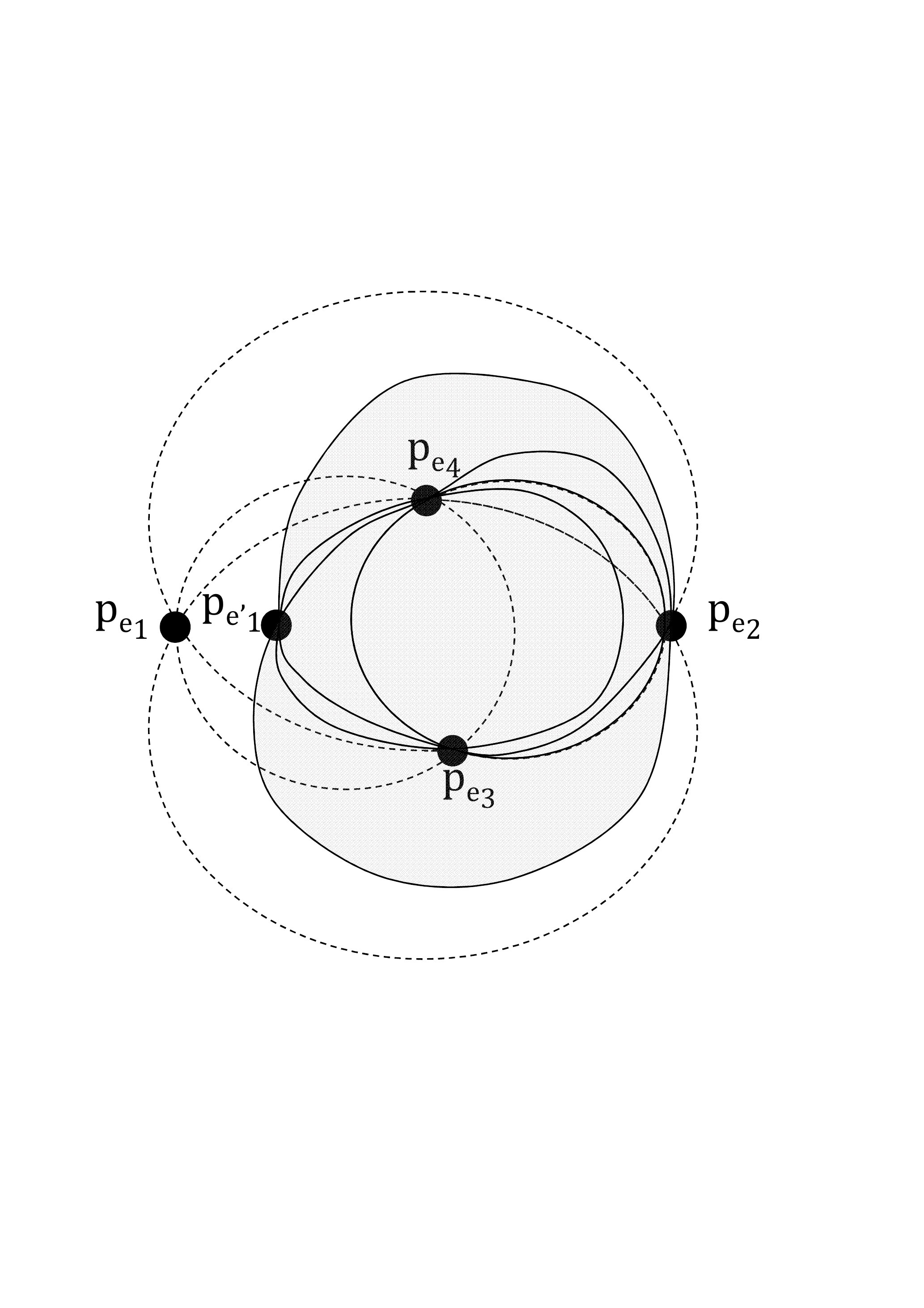}
\includegraphics[scale=0.25, bb = 30 200 600 700, clip]{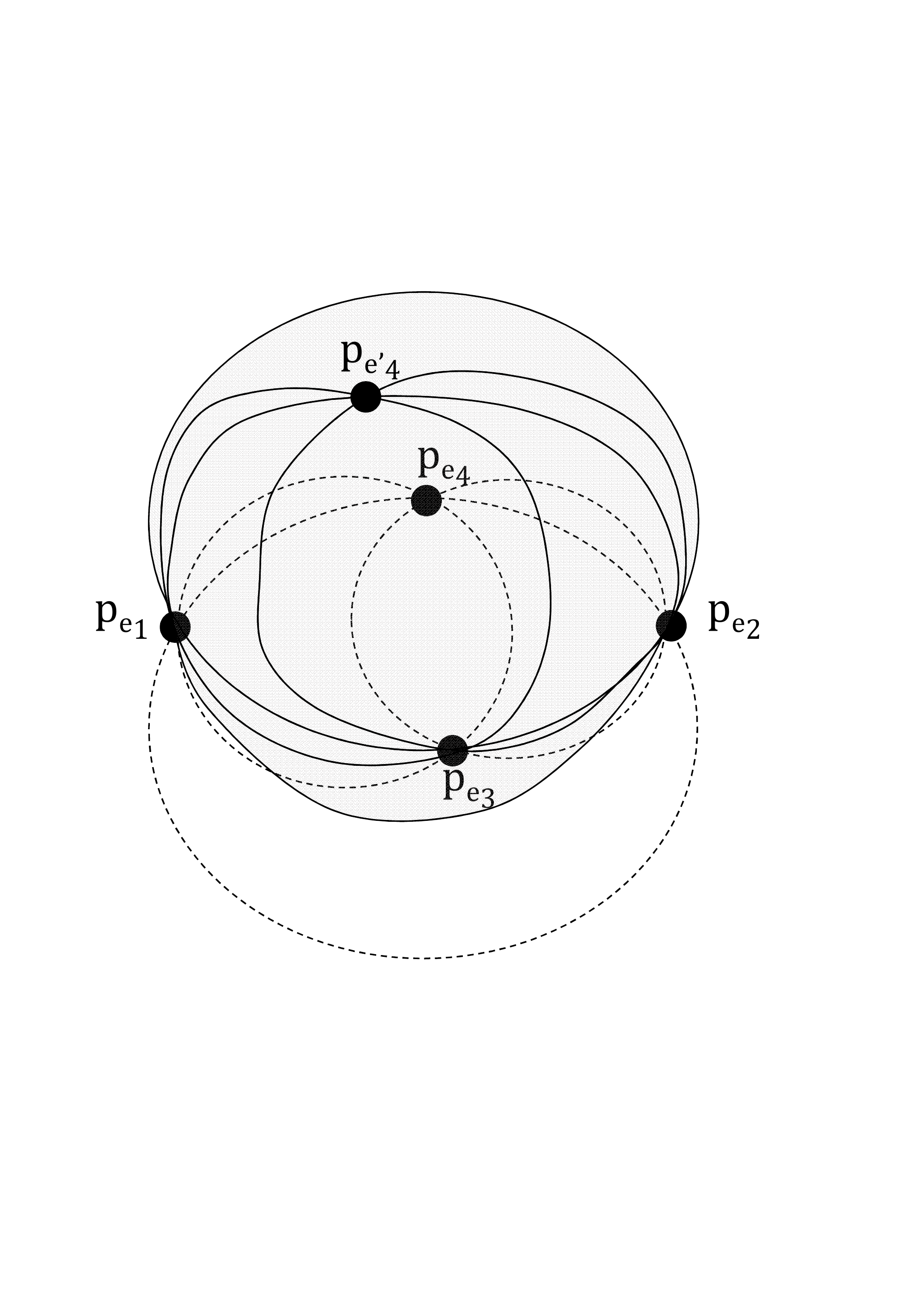}
\caption{$R_{e_1,e_2,e_3,e_4}$, $R_{e'_1,e_2,e_3,e_4}$ and $R_{e_1,e_2,e_3,e'_4}$ (shaded regions). }
\label{fig:discussion8}
\end{figure}
\end{proof}

\begin{remark}
Mandel proved that Las Vergnas conjecture is true for oriented matroids that have extensions in general position that are Euclidean~\cite{EM82}.
As a ``wishful thinking statement'', Mandel~\cite{EM82} provided a conjecture that every oriented matroid has such an extension.
If this conjecture is proved, Las Vergnas conjecture is immediately proved for all oriented matroids.
However, this conjecture is wide open, even for uniform matroid polytopes of rank $4$.
\end{remark}

\section{Concluding remarks}
In this paper, we proved that every uniform matroid polytopes of rank $4$ can be represented by a $2$-weak PPC configuration in general position.
We focused on the uniform case, but we believe that it is not difficult to extend the result to the non-uniform case.
There remain many open problems.
\begin{prob}
Can every uniform matroid polytope of rank 4 be represented by a strong PPC configuration or a $1$-weak PPC configuration?
\end{prob}
After realizing a matroid polytope by a $2$-weak PPC configuration, we can eliminate some redundant intersections (see Figures~\ref{fig:discussion5} and \ref{fig:discussion6}).
However, these operations cannot be applied if $R_1$, $R_2$, $R_3$, and $R_4$ are all non-empty. 
Is it possible to modify the proof of Theorem~\ref{main_thm} to represent the matroid polytopes by strong PPC configurations or $1$-weak PPC configurations?
\begin{figure}[h]
\centering 
\includegraphics[scale=0.4, bb = 28 522 563 842,clip]{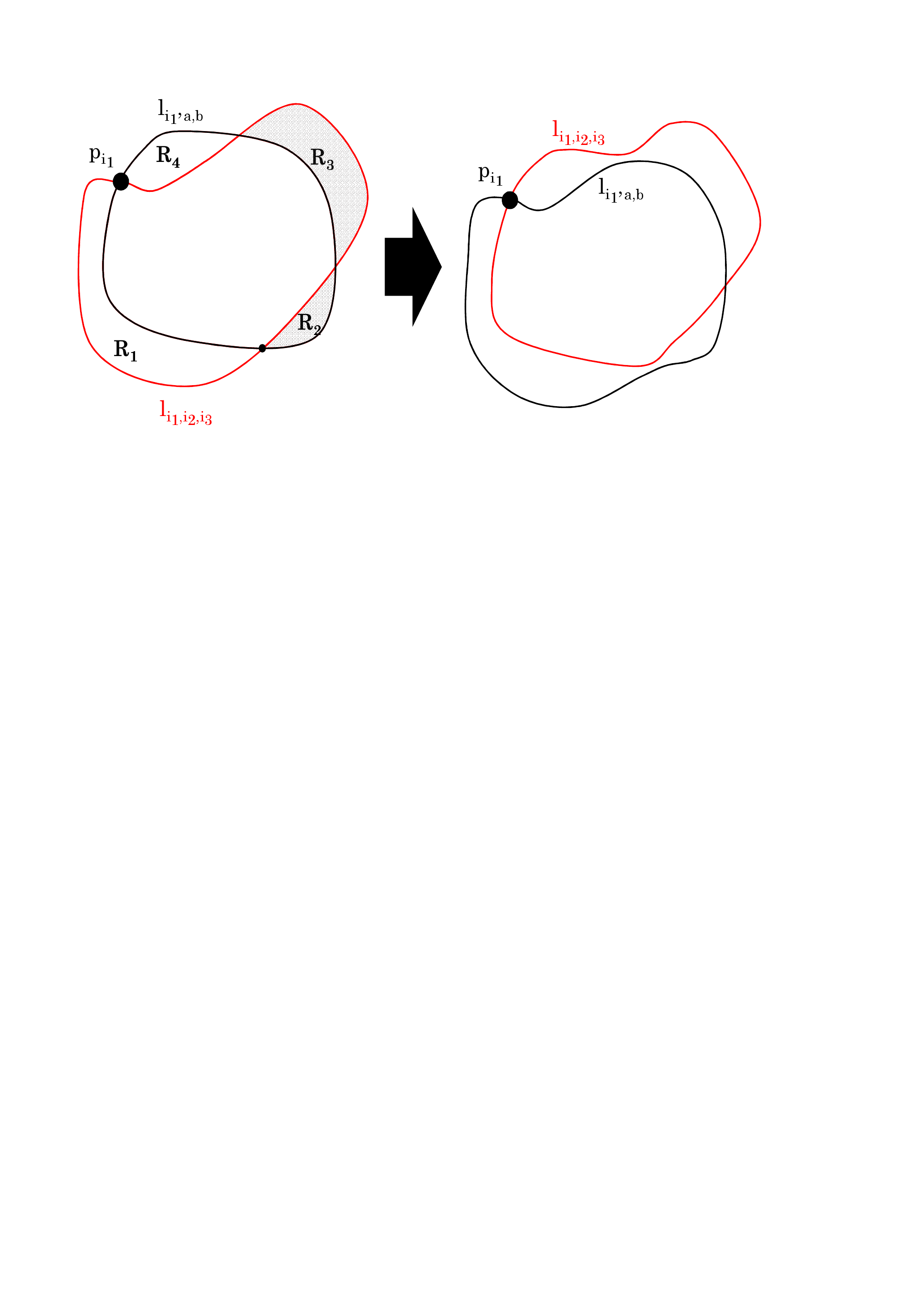}
\caption{Eliminating unnecessary intersection points ($R_3$ is empty)}
\label{fig:discussion5}
\includegraphics[scale=0.4, bb = 32 550 500 800,clip]{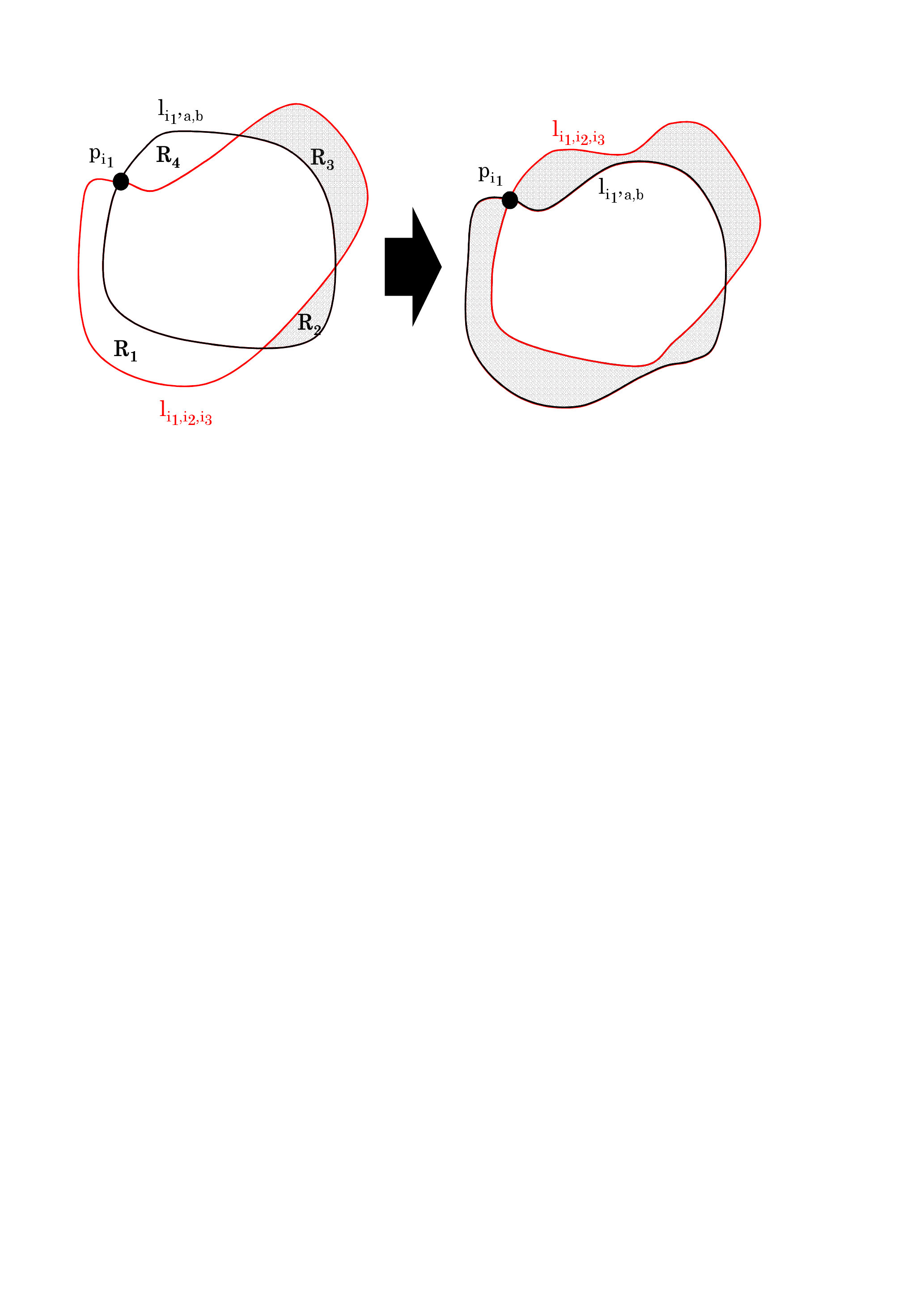}
\caption{Eliminating unnecessary intersection points ($R_1 \setminus \{ p_{i_1} \}$ and $R_4  \setminus \{ p_{i_1} \}$ are empty)}
\label{fig:discussion6}
\end{figure}

\begin{prob}
Is there a two-dimensional topological representation theorem for general oriented matroids of rank $4$?
\end{prob}
Reorientations of uniform matroid polytopes of rank $4$ can be represented by signed PPC configurations.  
However, there are oriented matroids that cannot be reoriented into matroid polytopes.
Thus, it would be natural to consider whether general oriented matroids of rank $4$ can be represented by two-dimensional topological objects. 
\begin{prob}
Can every matroid polytope be represented by a configuration of points and pseudospheres (under a suitable definition)?
\label{prob:general}
\end{prob}
In Section 4, we explained how to represent a realizable matroid polytope of rank $4$ as a PPC configuration via stereographic projection.
That approach can easily be generalized to the general-rank case. 
Thus, one promising method might be to generalize that approach to general matroid polytopes.
\begin{prob}
Can every pair of uniform matroid polytopes of rank $4$ on the same ground set be transformed into each other by a finite number of mutations, preserving matroid polytope property?
\end{prob}
Of course, this problem is motivated by Cordovil-Las Vergnas conjecture:
\begin{conj}(Cordovil-Las Vergnas)
Every pair of uniform oriented matroids of rank $r$ on the same ground set can be transformed into each other by a finite number of mutations. 
\end{conj}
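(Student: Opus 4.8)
This is a famous open conjecture, so what follows is a research program, not a proof: I describe the route I would attempt, using the two-dimensional representation developed above. The idea is to first settle the rank $4$ case for matroid polytopes via a homotopy theorem for PPC configurations, then bootstrap to arbitrary uniform oriented matroids of rank $4$, and finally induct on the rank.

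\emph{Step 1: a homotopy theorem for PPC configurations.} By Theorem~\ref{main_thm} every uniform matroid polytope of rank $4$ is carried by a $2$-weak PPC configuration in general position on $S^2$. I would introduce a notion of \emph{allowable isotopy} --- a continuous family ${\cal P}(\tau)$, $\tau \in [0,1]$, of point-pseudocircle systems in which each curve $l_{ijk}(\tau)$ stays pinned to the moving points $p_i(\tau), p_j(\tau), p_k(\tau)$ and condition (P3$^2$) holds for all but finitely many $\tau$ --- and prove the analogue of Ringel's homotopy theorem~\cite{R56}: any two $2$-weak PPC configurations realizing matroid polytopes are joined by an allowable isotopy whose combinatorial events are isolated, and at each event the chirotope changes by exactly one mutation. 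The technical core mirrors the pseudoline case: (i) a transversality argument that a generic isotopy crosses only codimension-one strata, where exactly one chirotope value $\chi(i,j,k,e)$ vanishes momentarily, i.e., a single point $p_e$ passes across a single curve $l_{ijk}$; (ii) a rerouting argument that whenever such a crossing would leave the class of matroid polytopes, one can deflect the isotopy so that the crossing instead occurs at a simplicial configuration and is a genuine mutation preserving the matroid-polytope property; (iii) the input, from Section~5, that every uniform matroid polytope of rank $4$ admits an acyclic reorientation with a simplicial tope, which lets one always steer toward a fixed normal form, say a cyclic-type configuration. The local reductions in Figures~\ref{fig:discussion5} and~\ref{fig:discussion6} are exactly what one wants to promote to steps of such an isotopy, and the configuration noted there with $R_1, R_2, R_3, R_4$ all nonempty is precisely where part (ii) becomes delicate.

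\emph{Steps 2 and 3: general rank $4$, then rank induction.} To drop the matroid-polytope hypothesis in rank $4$, I would use that mutations are compatible with reorientation --- a mutation ${\cal M} \to {\cal M}''$ induces a mutation ${}_{-F}{\cal M} \to {}_{-F}{\cal M}''$ --- so it suffices to connect every uniform oriented matroid of rank $4$ to some matroid polytope by mutations; for this one would mutate repeatedly at a simplicial tope (whose existence is Conjecture~\ref{conj:simplicial}, known in rank $4$ for matroid polytopes by Section~5 and more generally via~\cite{EM82} under a Euclideanness hypothesis) and show that a monotone quantity, such as the number of non-extreme elements, strictly decreases, so that finitely many mutations reach a matroid polytope, handled by Step 1. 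To go from rank $4$ to general rank, I would induct in the style of Roudneff-Sturmfels~\cite{RS88}: for uniform ${\cal M}, {\cal M}'$ of rank $r$ on a common ground set $E$ and $e \in E$, the contractions ${\cal M}/e$ and ${\cal M}'/e$ are mutation-connected by the inductive hypothesis; one then lifts that sequence to $E$ by realizing each rank $r-1$ mutation through mutations of ${\cal M}$ at simplicial topes involving $e$, and finally reconciles the localization data of $e$ by mutations through single-element extensions in general position.

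\emph{Main obstacle.} The crux --- and the reason the conjecture is open --- is Step 3, together with the delicate part (ii) of Step 1: a non-realizable oriented matroid need not admit the single-element extensions in general position that the realizable argument of~\cite{RS88} uses freely, and, relatedly, need not be Euclidean. Mandel's ``wishful'' conjecture that every oriented matroid has a Euclidean extension in general position (see the Remark above) would close exactly this gap, implying both the simplicial-tope conjecture and, combined with the present two-dimensional machinery, the connectivity statement asserted here. In the PPC picture the same difficulty reappears as a configuration that admits no empty-lens reduction; producing such a reduction by a controlled deformation in that situation is, I expect, the genuinely hard open problem.
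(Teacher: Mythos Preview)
The paper does not prove this statement: it is stated as Conjecture~6.5 in the concluding remarks, immediately after which the paper notes that it ``is verified only for rank $3$ oriented matroids~\cite{R56} and realizable oriented matroids~\cite{RS88}.'' There is no proof in the paper to compare your proposal against.

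You correctly recognize that the conjecture is open and present a research program rather than a proof. That is the honest thing to do. Your outline is reasonable as a wish list, and you yourself identify the genuine obstructions (the lifting step in the rank induction, the lack of Euclidean extensions in general position, Mandel's wishful conjecture). A few cautions on the program itself: in Step~2 you assume the simplicial-tope conjecture for \emph{arbitrary} uniform rank~$4$ oriented matroids in order to mutate toward a matroid polytope, but the paper only establishes it for uniform matroid polytopes of rank~$4$; the general rank~$4$ case is open, so this step is circular as written. In Step~1, the existence of an allowable isotopy connecting any two $2$-weak PPC configurations is itself a substantial and unproven assertion---unlike pseudoline arrangements, the space of PPC configurations has no known connectivity result, and the ``rerouting'' in part~(ii) is exactly the kind of move that fails without Euclideanness. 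So the program, while coherent, leans at each stage on statements at least as hard as the conjecture itself.
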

This conjecture is verified only for rank $3$ oriented matroids~\cite{R56} and realizable oriented matroids~\cite{RS88}.
The above problem asks if matroid polytopes can be transformed, while keeping the condition of matroid polytopes.
The general rank version of this problem is also discussed in \cite{RS88}.

\section*{Acknowledgement}
This research was supported by JSPS KAKENHI Grant Numbers JP26730002, JP19K2021000.


\begin{thebibliography}{99}
\bibitem{BL78} R. Bland and M. Las Vergnas, Orientability of matroids, {\it J. Combin. Theory Ser. B} 24(1):94--123, 1978.
\bibitem{BLSWZ99} A. Bj\"orner, M. Las Vergnas B. Sturmfels, N. White and G.M. Ziegler, {\it Oriented Matroids (2nd edition)},
Cambridge University Press, 1999.
\bibitem{B16} P. Brinkmann, {\it $f$-Vector Spaces of Polytopes, Spheres, and Eulerian Lattices}, Ph. D. Thesis, Freie Universit\"{a}t Berlin, 2016.
\bibitem{BZ18} P. Brinkmann and G.M. Ziegler, Small $f$-vectors of $3$-spheres and of $4$-polytopes,  {\it Math. Comp.} 87:2955--2975, 2018. 
%
%
\bibitem{EM82} J. Edmonds and A. Mandel, {\it Topology of oriented matroids}, Ph. D. Thesis, University of Waterloo, 1982.
\bibitem{FL78} J. Folkman and J. Lawrence, Oriented Matroids, {\it J. Combin. Theory Ser. B}, 25(2):199--236, 1978.
\bibitem{GP84} J.E. Goodman and R. Pollack, Semispaces of configurations, cell complexes of arrangements, {\it J. Combin. Theory Ser. A}, 37(3):257--293, 1984.
\bibitem{G72} B. Gr\"unbaum, {\it Arrangements and Spreads}, Regional Conf. Series in Math. 10, Amer. Math. Soc., 1972.
\bibitem{LV80} M. Las Vergnas, Convexity in oriented matroids, {\it J. Combin. Theory Ser. B}, 29(2):231--243, 1980.
\bibitem{M17} H. Miyata, On combinatorial properties of points and polynomial curves, arXiv:1703.04963, 14 pages, 2017.
\bibitem{R56}  G. Ringel, Teilungen der Ebene durch Geraden oder topologische Geraden, {\it Math. Z}., 64:79--102, 1956.
\bibitem{RS88} J.-P. Roudneff and B. Sturmfels, Simplicial cells in arrangements and mutations of oriented matroids,  {\it Geom. Dedicata}, 27(2):153--170, 1988. 
\bibitem{S79} R.W. Shannon, Simplicial cells in arrangements of hyperplanes, {\it Geom. Dedicata} 8(2):179--187, 1979.
\bibitem{Z95} G.M. Ziegler, {\it Lectures on Polytopes}, Springer-Verlag, Berlin, 1995.
\end{thebibliography}
\end{document}